\def\dd{\mathrm{\,d}} 
\newcommand\abs[1]{\left|#1\right|} 
\newcommand\normp[2]{\left[#1\right]_{#2}} 
\newcommand\normptilde[2]{\left\llbracket#1\right\rrbracket_{#2}} 
\DeclareMathOperator{\diam}{diam} 
\DeclareMathOperator{\dist}{\mathit{d}} 
\newcommand{\Chi}{\mathcal{X}} 
\let\emptyset\varnothing 
\DeclareMathOperator{\card}{\#} 
\DeclareMathOperator*{\essinf}{ess\,inf}
\DeclareMathOperator*{\loc}{loc} 
\DeclareMathOperator{\rad}{rad} 
\newcommand{\R}{\mathbb{R}}
\def\XXint#1#2#3{\mkern3mu{\setbox0=\hbox{$#1{#2#3}{\int}$ }
\vcenter{\hbox{$#2#3$ }}\kern-.6\wd0}}
\theoremstyle{plain}
\newtheorem{theorem}{Theorem}
\newtheorem{propo}[theorem]{Proposition}
\newtheorem{lemma}[theorem]{Lemma}
\numberwithin{theorem}{section}
\theoremstyle{remark}
\newtheorem{remark}[theorem]{Remark}
\theoremstyle{definition}
\newtheorem{dfn}[theorem]{Definition}
\begin{document}


\title{On the extension of Muckenhoupt weights in metric spaces}
\date{\today}
\author{Emma-Karoliina Kurki}
\address[E-K.K.]{Aalto University, Department of Mathematics and Systems Analysis, P.O. BOX 11100, FI-00076 Aalto, Finland}
\email{emma-karoliina.kurki@aalto.fi}

\author{Carlos Mudarra}
\address[C.M.]{Aalto University, Department of Mathematics and Systems Analysis, P.O. BOX 11100, FI-00076 Aalto, Finland}
\email{carlos.c.mudarra@jyu.fi, carlos.mudarra@aalto.fi}

\keywords{Muckenhoupt weight, metric measure space, doubling condition}

\subjclass[2010]{30L99, 42B25, 42B37}

\thanks{\emph{Acknowledgements.} E.-K. Kurki was funded by a young researcher's grant from the Emil Aaltonen Foundation. C. Mudarra acknowledges financial support from the Academy of Finland. We thank Juha Kinnunen for many helpful discussions.}
\begin{abstract}
A theorem by Wolff states that weights defined on a measurable subset of $\mathbb{R}^n$ and satisfying a Muckenhoupt-type condition can be extended into the whole space as Muckenhoupt weights of the same class. We give a complete and self-contained proof of this theorem generalized into metric measure spaces supporting a doubling measure. Related to the extension problem, we also show estimates for Muckenhoupt weights on Whitney chains in the metric setting. 
\end{abstract}

\maketitle

\section{Introduction}

On a metric space $X$ with a doubling measure, the familiar Muckenhoupt class $A_p$ consists precisely of those weights for which the Hardy-Littlewood maximal operator maps the weighted space $L^p(X, w\dd\mu)$ onto itself. 
In addition to $A_p$ weights being ubiquitous in harmonic analysis, weighted norm inequalities have applications in the study of regularity of certain partial differential equations. In order to deduce weighted Poincaré inequalities using the theory of global weights, we would like to extend Muckenhoupt weights defined on subsets to the entire space. 

Our main result is the following theorem that provides an abstract starting point for the investigation of extensions. It is the generalization to a metric-space context of a result due to Thomas H. Wolff. The result in $\mathbb{R}^n$ supposedly originates in an elusive preprint titled ``Restrictions of $A_p$ weights'', that to our knowledge remains unpublished. An outline of the Euclidean proof can be found in \cite{MR807149}, Theorem 5.6. However, the metric setting brings about technical challenges that are not present in the Euclidean case.
\begin{theorem}\label{maintheorem}
Let $X$ be a complete metric space with a doubling measure, $E\subset X$ a measurable set with $\mu(E)>0$, and $w$ a weight on $E$. Then, for $1<p<\infty$, the following statements are equivalent. 
\begin{enumerate}[label=\normalfont{(\roman*)}]
\item \label{condi} There exists a weight $W\in A_p(X)$ such that $W=w$ a.~e. on $E$; 
\item \label{condii} There exists an $\varepsilon>0$ such that 
\begin{equation}\label{intro:muck}
\sup_{\substack{B\subset X\\ B\text{ ball}}}\left(\frac{1}{\mu(B)}\int_{B\cap E}w^{1+\varepsilon}\dd\mu\right)\left(\frac{1}{\mu(B)}\int_{B\cap E}\left(\frac{1}{w^{1+\varepsilon}}\right)^\frac{1}{p-1}\dd\mu\right)^{p-1} <\infty.
\end{equation}
\end{enumerate}
In addition, whenever $p=1$, the condition \ref{condii} takes the following form: There exists a constant $C>0$ such that
$$\frac{1}{\mu(B)} \int_{B \cap E} w^{1+\varepsilon} \dd \mu \leq C \essinf_{B \cap E} w^{1+\varepsilon}$$
for every ball $B \subset X$. 
\end{theorem}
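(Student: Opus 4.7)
The plan is as follows. The implication \ref{condi} $\Rightarrow$ \ref{condii} is routine. If $W \in A_p(X)$, the self-improvement property of Muckenhoupt weights on doubling metric measure spaces supplies $\varepsilon > 0$ for which $W^{1+\varepsilon}$ still satisfies an $A_p$-type bound on $X$ (via a reverse Hölder inequality for $W$ together with the corresponding one for $W^{-1/(p-1)}$). Restricting each of the two averages appearing in that bound from a ball $B$ to the subset $B \cap E$ only decreases them, and since $W = w$ almost everywhere on $E$, condition \ref{condii} follows. For $p = 1$ the same argument works, using the essinf characterization of $A_1$ and the reverse Hölder inequality for $A_1$ weights.

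The substance of the theorem is in \ref{condii} $\Rightarrow$ \ref{condi}, the metric-space version of Wolff's extension. My approach is to set $W = w$ on $E$ and to construct $W$ on $X \setminus E$ from a Whitney covering $\{B_i\}_{i \in I}$ of $X \setminus \overline{E}$ with radii $r_i \simeq \dist(B_i, E)$. For each $i$ I would select an associated ball $\widetilde{B}_i$ of radius comparable to $r_i$ which captures a definite portion of $E$, in the sense that $\mu(\widetilde{B}_i \cap E) \simeq \mu(\widetilde{B}_i)$, and then declare $W$ to equal a constant $a_i$ on the $i$-th piece of a partition of $X \setminus E$ subordinate to $\{B_i\}$. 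The constant $a_i$ is chosen from an average of $w$ on $\widetilde{B}_i \cap E$, calibrated so that $a_i^{-1/(p-1)}$ is simultaneously comparable to the corresponding average of $w^{-1/(p-1)}$; this calibration is where the $L^{1+\varepsilon}$ room in \ref{condii} is spent.

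The core of the proof is then to verify that this $W$ belongs to $A_p(X)$. For an arbitrary ball $B \subset X$, the two averages $\frac{1}{\mu(B)}\int_B W \dd\mu$ and $\frac{1}{\mu(B)}\int_B W^{-1/(p-1)} \dd\mu$ split into a contribution from $B \cap E$, controlled directly by \ref{condii} with Hölder's inequality used to absorb the exponent $1+\varepsilon$ down to $1$, and sums over the Whitney balls $B_i$ meeting $B$. The latter are handled by chain estimates: any two Whitney balls intersecting a common ball can be joined by a chain of pairwise overlapping Whitney balls along which the values $a_i$ vary by only a bounded factor, and this propagation is itself a consequence of \ref{condii} applied to pairs of overlapping associated balls. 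Combining the $E$-contribution and the Whitney contribution yields a uniform $A_p$ bound.

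The principal obstacle is geometric. A metric space carries no cubes and no canonical reflection sending a Whitney ball into $E$, so the associated balls $\widetilde{B}_i$ must be selected by hand and their density of $E$ established by a covering argument. More delicately, the chain estimates between the $a_i$'s must propagate with constants independent of the chain length; this is exactly the content of the Whitney-chain lemma for Muckenhoupt weights advertised in the abstract, and it replaces the cube reflections and pointwise decay arguments used in the Euclidean proof. The case $p = 1$ follows from the same construction, with the essinf formulation of \ref{condii} providing both the required upper bound on each $a_i$ and the pointwise lower bound $w \geq c \, \essinf_{\widetilde{B}_i \cap E} w$ needed to close the $A_1$ inequality.
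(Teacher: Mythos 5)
Your treatment of \ref{condi}$\Rightarrow$\ref{condii} matches the paper's: both invoke the reverse H\"older inequality for $W\in A_p(X)$ to get $W^{1+\varepsilon}\in A_p(X)$ (Lemma~\ref{thm:simp}), then restrict the averages from $B$ to $B\cap E$. That part is fine.

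The proposed construction for \ref{condii}$\Rightarrow$\ref{condi} does not work for the theorem as stated, and the failure is fundamental rather than technical. Your plan requires, for each Whitney ball $B_i$ of $X\setminus\overline E$, an associated ball $\widetilde B_i$ of comparable radius with $\mu(\widetilde B_i\cap E)\simeq\mu(\widetilde B_i)$. But $E$ here is an \emph{arbitrary} measurable set with $\mu(E)>0$; there is no reason any such ball exists. The set $E$ may have no interior and its density may drop to zero at every fixed scale near a given Whitney ball, so the quantity $\mu(\widetilde B_i\cap E)/\mu(\widetilde B_i)$ cannot be bounded below. The paper explicitly flags this as the obstruction that forces a different strategy: ``$\mu(B\cap E)$ might be too small in comparison to $\mu(B)$, unless we tighten our assumptions on the set $E$.'' This is precisely why the induced classes $\widetilde A_p(E)$ cannot be shown to satisfy a reverse H\"older inequality, and why one must assume the condition for $w^{1+\varepsilon}$ rather than for $w$. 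A second, independent problem is the chain estimate: you assert that the constants $a_i$ ``vary by only a bounded factor'' along Whitney chains, but Lemma~\ref{lem:hold1}\ref{three} gives only a factor $\exp(C\,\widetilde k(B_1,B_2))$, which is not uniformly bounded because the chain length $\widetilde k$ is unbounded over an arbitrary domain. Controlling it requires geometric hypotheses on $E$ (e.g.\ uniformity), exactly as in Holden's work, which the paper distinguishes from Wolff's theorem.

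What you have sketched is essentially Holden's argument \cite{MR1162041}, which proves a \emph{sufficient} condition for extension under extra geometric assumptions on the domain; it does not establish the equivalence for arbitrary measurable $E$. The paper's actual route is entirely different: set $v=w^{1+\varepsilon/2}$, invoke the Jones-type factorization (Proposition~\ref{thm:joneses}) to write $v=v_1v_2^{1-p}$ with $v_1,v_2\in\widetilde A_1(E)$, extend each $v_i$ via the Hardy--Littlewood maximal function of $\Chi_E v_i$ and the Coifman--Rochberg estimate (Proposition~\ref{thm:maxime}) to obtain $A_1(X)$ weights $V_i=M(\Chi_E v_i)^\delta$ with $\delta=(1+\varepsilon/2)^{-1}$, and finally take $W=gV_1V_2^{1-p}$ with a bounded correction $g$. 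The factorization theorem in turn rests on the boundedness of the restricted maximal operator $m_E$ on $L^p(E,v)$ (Proposition~\ref{boundedoperator}), which is the real technical core. None of this appears in your proposal; you should abandon the Whitney-reflection scheme for this theorem and study the factorization-plus-maximal-function argument instead.
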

In Section \ref{section:wolff} below, we present a complete and self-contained proof of Wolff's extension theorem (above and Theorem \ref{thm:wolff}) for measurable sets in a metric space supporting a doubling measure. Comparing \ref{condii} to the classical $A_p$ condition, it is clear that we need to deal with weights and maximal functions restricted to arbitrary measurable subsets $E\subset X$. We have chosen to call these classes \emph{induced $A_p$ weights}; see Definition \ref{def:Aptilde} below. It is not obvious at the outset whether all properties of globally defined weights hold true for this class as well. 

Like the corresponding proof in $\mathbb{R}^n$, our proof relies on a factorization theorem, which in turn is based on the boundedness of the maximal operator. In particular, we need to show that the restricted maximal function is bounded on $L^p(E,w)$ when the weight $w$ belongs to the induced $A_q$ class for some $q<p$ (Theorem \ref{estimateAqeveryg}). The proofs of this theorem in the whole space are based either on Calderón-Zygmund decompositions on cubes (when $X=\mathbb{R}^n)$ or on Vitali-type coverings of the distributional sets of the maximal function. It is not clear how to adapt these arguments when $E$ is an arbitrary subset, because of the simple fact that the \textit{relative balls} $E\cap B$ do not necessarily satisfy a doubling condition, i.~e., $\mu( E \cap 5B )$ or $w(E \cap 5B )$ are not comparable to $\mu( E \cap B)$ or $w(E \cap B)$ in general.

The reader might wonder why we need to assume the Muckenhoupt-type condition \eqref{intro:muck} for $w^{1+\epsilon}$ instead of simply stating the corresponding condition for $w.$ A Muckenhoupt weight $W\in A_p(X)$ in the whole space always satisfies a self-improving property in the sense that $W^{1+\varepsilon}$ also belongs to $A_p(X)$ for a suitable $\varepsilon$ depending on the characteristic $A_p$ constant of $W$ (Lemma \ref{thm:simp}). This is a consequence of the fact that global Muckenhoupt weights satisfy a reverse Hölder inequality (RHI; Proposition \ref{thm:revho}). As a result, one is free to apply Gehring's lemma to obtain the desired self-improving inequality. However, it is unclear whether the induced classes of weights satisfy a RHI, since it is yet again impossible to control the measures of the relative balls $B\cap E$ in terms of those of $B$. Even when the measure is positive, $\mu(B \cap E)$ might be too small in comparison to $\mu(B)$, unless we tighten our assumptions on the set $E$. This technicality destroys our ability to compute the averaged integrals that would lead to the RHI. 

For an early treatment of harmonic analysis in metric spaces, see \cite{MR0499948}. Muckenhoupt weights in particular are discussed in \cite{MR1791462} and \cite{MR1011673}. A solid reference to the theory, albeit in $\mathbb{R}^n$, is \cite{MR807149}. For recent results concerning reverse Hölder inequalities for $A_\infty$ weights or strong $A_p$ weights, as well as versions of the Gehring lemma in various spaces, see the articles  \cites{MR3785798, AndersonHytonenTapiola2017, AuscherBortzEgertSaari2020, MR2078632, MR3544941, HytonenPerezRela2012, HytonenPerez2013, MR3265363, MR3130552, MR2815740, LuquePerezRela2017}.

In Sections \ref{section:balls} and \ref{section:weights}, we turn to an intended application of Theorem \ref{maintheorem}. This theorem gives a necessary and sufficient condition for the existence of an extension. One might ask what are the subsets $E$ and weights $w$ that satisfy \eqref{intro:muck} and consequently possess an extension to the entire space. Peter J. Holden \cite{MR1162041}, working in $\mathbb{R}^n$, has verified \eqref{intro:muck} for weights in $A_p(E)$  under additional geometric assumptions on the set $E$. We made an effort to reproduce Holden's argument in the metric setting, yet were not able to reach the point where we could apply Theorem \ref{maintheorem}. However, we believe that our findings are of independent interest and value for future research. In particular, Lemma \ref{lem:hold2} states that the weights of balls on a Whitney chain are comparable as long as we are able to control the length of the chain. In \cite{MR1162041}, this lemma is used to recover the Euclidean equivalent of Theorem \ref{maintheorem}~\ref{condii}. 

\section{The extension theorem}\label{section:wolff}

The results in this section apply in a complete metric measure space $\left(X,\dist,\mu\right)$. In addition, we assume that the nontrivial Borel regular measure $\mu$ satisfies the \emph{doubling condition}: there exists a constant $C_d=C_d(\mu) > 1$ only depending on $\mu$ such that 
\begin{equation}\label{Doubling}
0<\mu\left(2 B\right) \leq C_{d}  \mu\left( B \right) < \infty
\end{equation}
for all balls $B\subset X$. The constant $C_d$ is spoken of as the \emph{doubling constant}. In particular, we assume that every ball in $X$ has positive and finite measure. An argument involving the Vitali covering lemma shows that $X$ is separable and that every ball is totally bounded. This implies that $X$ is locally compact and proper, which in turn means that every closed and bounded subset of $X$ is compact.

A \emph{ball} is determined by its center $x$ and radius $r$ and denoted $B = B(x, r) = \lbrace y\in X \mathbin{:} \dist(x,y)<r \rbrace$, where the center and radius are left out when not relevant to the discussion. Observe that in general, the center and the radius of a ball $B$ are not uniquely determined by $B$ as a set. We use the notation $\rad(B) = r$ when $B=B(x,r)$ and, at times, $cB = B(x, cr)$ for the ball dilated by a constant $c$.

For any two nonnegative numbers $A$ and $B$, if there exists a constant $C\in \left(0,\infty\right)$ such that $A\leq CB$, we write $A\lesssim B$. Furthermore, we write $A\approx B$ whenever there exist constants $C_1, C_2\in \left(0,\infty\right)$ such that $C_1A\leq B \leq C_2A$. This notation is used where the exact magnitude of the constants is not of interest. 

Whenever $E\subset X$ is a measurable subset and the function $f$ is integrable on every compact subset of $E$ we say that $f$ is \emph{locally integrable} on $E$, denoted $f\in L^1_{\loc}(E)$. If the measure $\nu$ is absolutely continuous with respect to $\mu$ and if there exists a nonnegative locally integrable function $w$ such that $d\nu=w\dd\mu$, we call $\nu$ a \emph{weighted measure} with respect to $\mu$, and $w$ a \emph{weight}. \cite{MR1011673} In practice, we assume $w$ to be positive almost everywhere in $E$. For any measurable subsets $F\subset E$ and a weight $w$ on $E,$ we write $w(F)=\int_F w \dd \mu$.

For the purposes of Theorem \ref{thm:wolff}, we introduce the following classes of \emph{induced} Muckenhoupt weights on a subset, which we denote by $\widetilde{A}_p$. 
\begin{dfn}\label{def:Aptilde}On a metric space $X$, let $E\subset X$ be a measurable subset with $\mu(E)>0.$ Let $w$ be a weight on $E.$ If $1<p<\infty,$ we say that $w\in \widetilde{A}_p(E)$ whenever
\begin{equation}\label{Aptilde}
\normptilde{w}{p}=\sup_{\substack{B\subset X\\ B\text{ ball}}}\left(\frac{1}{\mu(B)}\int_{B\cap E}w \dd\mu\right)\left(\frac{1}{\mu(B)}\int_{B\cap E}\left(\frac{1}{w}\right)^\frac{1}{p-1}\dd\mu \right)^{p-1} <\infty.
\end{equation}
If $p=1,$ we define $\widetilde{A}_1(E)$ as the class of weights $w$ for which there exists $C>0$ with
\begin{equation}\label{A1tilde}
\frac{1}{\mu(B)} \int_{B \cap E} w  \dd \mu \leq C \essinf_{ B \cap E} w 
\end{equation}
for every ball $B\subset X$. We denote by $\normptilde{w}{1}$ the infimum of the $C>0$ for which the inequality \eqref{A1tilde} holds. 
\end{dfn}
Observe that conditions \eqref{Aptilde} and \eqref{A1tilde} imply that $w$ is integrable on each $B \cap E,$ where $B \subset X$ is a ball. Whenever $E=X$, the above classes coincide with Muckenhoupt weights as usually defined. In this case we will denote them by $A_p(X)$ and $A_1(X)$, respectively. Notice that it is not possible to reduce \eqref{Aptilde} to the $A_p(X)$ condition e.~g. by replacing $w$ with $\Chi_Ew$.

\begin{dfn}\label{def:Mf} The \emph{Hardy-Littlewood maximal function} is defined by
\begin{equation*}
Mf(x)=\sup_{B\ni x}\frac{1}{\mu(B)}\int_B\abs{f}\dd\mu,
\end{equation*}
where $X$ is a metric space, $B\subset X$ are balls and $f\in L^1_{\loc}(X)$. Whenever $E\subset X$ is a measurable set and $x\in E$, we also define a maximal function relative to the set $E$ by
\begin{equation*}
m_Ef(x) = \sup_{B\ni x}\frac{1}{\mu(B)}\int_{B\cap E}\abs{f}\dd \mu. 
\end{equation*}
\end{dfn}

In the following we verify a number of propositions regarding the $\widetilde{A}_p$ classes, leading to the proof of the extension theorem. These correspond to well-known results for $A_p$  weights in $\R^n$. While the proofs are based on those in $A_p(\R^n)$, we have chosen to present them in full, because our notion of induced weights along with the metric setting presents some difficulties that do not appear in $\mathbb{R}^n$.

Throughout the rest of this section $(X,d,\mu)$ will denote a complete metric measure space, with the measure $\mu$ satisfying doubling condition \eqref{Doubling} and thus all the properties mentioned at the beginning of the section.

To begin with, the following proposition is a generalization of a well-known result for $A_1(\mathbb{R}^n)$; see \cite{MR3243734}, p.~502. 
\begin{propo}\label{thm:maxim1}Let $E\subset X$ be a measurable set with $\mu(E)>0.$ If $w \in \widetilde{A}_1(E),$ then $w(x) \leq m_Ew(x)\leq \normptilde{w}{1} w(x)$ for almost every $x\in E$. 
\end{propo}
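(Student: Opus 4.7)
The plan is to establish the two inequalities of the proposition separately.

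For the lower bound $w(x)\leq m_E w(x)$, I would invoke the Lebesgue differentiation theorem. Condition \eqref{A1tilde} implies that $w$ is integrable on $B\cap E$ for every ball $B\subset X$, so $w\Chi_E\in L^1_{\loc}(X)$. Since $X$ is a complete metric measure space with a doubling measure, the Lebesgue differentiation theorem applies and gives, for almost every $x\in X$,
$$\lim_{r\to 0^+}\frac{1}{\mu(B(x,r))}\int_{B(x,r)\cap E} w\dd\mu = w(x)\Chi_E(x).$$
Restricting to $x\in E$, the limit equals $w(x)$, while each averaged integral appearing in the limit is clearly bounded by $m_E w(x)$, because the supremum in the definition of $m_E$ ranges over all balls containing $x$. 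This yields $w(x)\leq m_E w(x)$ for almost every $x\in E$.

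For the upper bound $m_E w(x)\leq\normptilde{w}{1}w(x)$, my plan is to argue via a Lindelöf-type covering argument. Using the density of the rationals, decompose the exceptional set as
$$\{x\in E: m_E w(x)>\normptilde{w}{1}w(x)\} = \bigcup_{c\in\mathbb{Q},\, c>0} A_c, \qquad A_c := \{x\in E: m_E w(x)>c\normptilde{w}{1} \text{ and } w(x)<c\},$$
so it suffices to show $\mu(A_c)=0$ for each positive rational $c$. Given $x\in A_c$, the definition of $m_E$ furnishes a ball $B_x\ni x$ with $\frac{1}{\mu(B_x)}\int_{B_x\cap E} w\dd\mu > c\normptilde{w}{1}$, and combining this with \eqref{A1tilde} forces $\essinf_{B_x\cap E} w > c$. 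Therefore the set $N_{B_x} := \{y\in B_x\cap E: w(y)<\essinf_{B_x\cap E} w\}$ is $\mu$-null, and because $A_c\cap B_x\subset\{y\in B_x\cap E: w(y)<c\}\subset N_{B_x}$, we conclude $\mu(A_c\cap B_x)=0$.

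To promote this local vanishing to $\mu(A_c)=0$, I would use that $X$ is separable (as recalled at the start of Section \ref{section:wolff}) and hence Lindelöf: the open cover $\{B_x\}_{x\in A_c}$ of $A_c$ admits a countable subcover $\{B_{x_i}\}_{i\in\mathbb{N}}$, so $\mu(A_c)\leq\sum_i\mu(A_c\cap B_{x_i})=0$. The delicate point of the whole argument is precisely this last reduction: the null set $N_{B_x}$ depends on the particular ball, so without the countable subcover one cannot aggregate the various null sets into a single one that contains all of $A_c$. This step replaces, in the present metric setting, the classical $\mathbb{R}^n$ manipulation with cubes of rational vertices.
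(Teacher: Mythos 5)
Your proof is correct, and it takes a genuinely different route from the paper on the substantive half of the statement (the upper bound).

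The paper also defines the bad set $A=\{x\in E: m_E w(x)>\normptilde{w}{1} w(x)\}$ and aims to write it as a countable union of null sets of the form $D_{B'}=\{y\in B'\cap E: w(y)<\essinf_{B'\cap E} w\}$, but it insists that the balls $B'$ come from a \emph{fixed} countable family $\mathcal{F}=\{B(z_k,q): k\geq 1,\ q\in\mathbb{Q}^+\}$. To land each $x\in A$ inside some $D_{B'}$ with $B'\in\mathcal{F}$, the paper has to shrink a witnessing ball $B\ni x$ to a nearby ball $B'\in\mathcal{F}$ while preserving the strict inequality; this forces an absolute-continuity-of-the-integral argument to control the mass lost on $B\setminus B'$, together with a $(1-\delta)$ slack in the initial inequality. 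Your approach trades all of that for a preliminary stratification by a rational threshold: you write $A=\bigcup_{c\in\mathbb{Q}^+}A_c$ with $A_c=\{x\in E: m_E w(x)>c\normptilde{w}{1},\ w(x)<c\}$, and the rational $c$ then serves as a uniform cutoff so that \emph{every} point of $A_c$ lying in a witnessing ball $B_x$ (not just the base point $x$) is captured by the null set $N_{B_x}$. This converts a non-aggregable family of null sets into one that aggregates, and a Lindel\"of subcover (available since $X$ is a separable metric space) finishes the job. Your version avoids the absolute-continuity and ball-perturbation machinery entirely, at the modest cost of the extra countable decomposition in $c$; both proofs ultimately rest on separability, but they deploy it in different places. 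As a small side effect, your proof works unchanged whether $m_E$ is defined with open or closed balls, whereas the paper remarks that with closed balls one must modify the shrinking step. One minor point worth noting explicitly in your write-up: the passage from $m_E w(x)>\normptilde{w}{1}w(x)$ to the existence of a separating rational $c$ uses $w(x)<\infty$, which holds a.e.\ since $w\in L^1_{\loc}$; and the inequality \eqref{A1tilde} does hold with $C=\normptilde{w}{1}$ itself since the defining inequality is closed under $C\to\normptilde{w}{1}^+$.
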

\begin{proof}
The first inequality is a consequence of the Lebesgue differentiation theorem for $\mu$. For a proof of this classical theorem in a metric space with a doubling measure see \cite{MR1800917}, p.~4. As for the second one, let $A= \lbrace x\in E \mathbin{:} m_E w(x) > \normptilde{w}{1} w(x) \rbrace.$ We aim to show that $\mu(A)=0.$ Because $X$ is separable, there exists a dense sequence of points $\lbrace z_k \rbrace_k$ in $X.$ Define a countable collection of balls $\mathcal{F} = \lbrace B(z_k, q) \mathbin{:} k \geq 1, q\in \mathbb{Q}^+ \rbrace.$ Then, for every $x\in A$, there exist a $\delta\in(0,1)$ and a ball $B = B(z,r)\ni x$ such that
$$
\normptilde{w}{1} w(x) < (1-\delta)\frac{1}{\mu(B)}\int_{B\cap E}w\dd\mu.
$$

For every $\varepsilon \in (0,1) $ denote $B_\varepsilon=B(z,(1-\varepsilon) r).$ Because $w$ is integrable on $B\cap E,$ by the absolute continuity of the Lebesgue integral there exists an $\eta>0$ such that if $F\subset B$ is measurable and $\mu(F) \leq \eta,$ then $\int_{F\cap E} w \leq \delta \int_{B\cap E} w.$ If $\varepsilon \in (0,1) $ is small enough so that $x \in B_\varepsilon$ and $\mu\left( B \setminus B_\varepsilon \right) \leq \eta$, then $\int_{{\left(B \setminus B_\varepsilon\right)\cap E}} w \leq \delta \int_{B\cap E} w.$ Let $B'=B(z', q)$, where $z' \in \lbrace z_k \rbrace_k$ and $q\in \mathbb{Q}^+ $ are chosen so that $\dist(z,z') < \varepsilon r/4$ and $(1-3 \varepsilon/4) r< q  < (1-\varepsilon/4) r.$ The triangle inequality gives the inclusions $B_ \varepsilon \subset B' \subset B,$ implying that $x\in B'$ and $\int_{\left(B \setminus B'\right)\cap E} w \leq \delta \int_{B\cap E} w .$ It follows that
$$
\int_{B\cap E} w\dd\mu =\int_{B'\cap E} w\dd\mu  + \int_{\left(B \setminus B'\right)\cap E} w\dd\mu  \leq \int_{B'\cap E} w\dd\mu  + \delta \int_{B\cap E} w\dd\mu
$$
which, recalling that $w\in \widetilde{A}_1(E)$, yields
$$
\normptilde{w}{1} w(x) < (1-\delta) \frac{1}{\mu(B)}\int_{B\cap E}w\dd\mu < \frac{1}{\mu(B')}\int_{B'\cap E}w\dd\mu \leq \normptilde{w}{1}\essinf_{B'\cap E}w.
$$
We have shown that $w(x) < \essinf_{B' \cap E} w$, which means that $x$ belongs to the set $D_{B'}= \lbrace y\in  B' \cap E \mathbin{:} w(y) < \essinf_{B' \cap E} w \rbrace,$ where $\mu(D_{B'})=0.$ Hence $A \subset \bigcup_{B' \in \mathcal{F}} D_{B'}$, which is a countable union of sets of measure zero. 
\end{proof}
We remark that the above proposition remains true if the maximal function $m_E$ is defined by taking a supremum over closed balls instead. The proof is similar, except that the absolute continuity of the Lebesgue integral is not needed.

The next lemma follows from Proposition \ref{thm:maxim1}, and is needed in the proof of Theorem \ref{thm:wolff} below.
\begin{lemma}\label{remarkfinitenessmaximal}
For a measurable set $E\subset X$ with $\mu(E)>0$ and a weight $w \in \widetilde{A}_1(E),$ the function $w \Chi_E$ is in $L^1_{\loc}(X)$ and its maximal function $M( w \Chi_E )$ is finite at almost every point of $X.$ Here $ w \Chi_E $ is the function in $X$ that coincides with $w$ on $E$ and vanishes outside $E.$
\end{lemma}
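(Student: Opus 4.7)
Plan: The claim splits into the local integrability of $w\Chi_E$ and the almost-everywhere finiteness of $M(w\Chi_E)$ on $X$. For the first, I would bound $\int_{B\cap E} w\dd\mu$ directly using the $\widetilde{A}_1$ condition: if $\mu(B\cap E)>0$, then $\essinf_{B\cap E} w$ is a finite positive number because $w$ is almost everywhere finite and positive on $E$, so \eqref{A1tilde} gives $\int_{B\cap E}w\dd\mu\leq \normptilde{w}{1}\mu(B)\essinf_{B\cap E}w<\infty$; the opposite case is trivial. Since $X$ is proper, compact sets lie in balls, whence $w\Chi_E\in L^1_{\loc}(X)$.

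The pointwise almost-everywhere finiteness of $M(w\Chi_E)$ is the main point. Proposition \ref{thm:maxim1} only controls $m_E w=M(w\Chi_E)$ on $E$, so I need to handle points outside $E$ as well. Fix $x_0\in X$. Since $\mu(E)>0$, choose $k_0\in\N$ with $\mu(E\cap B(x_0,k_0))>0$; then $L_0:=\essinf_{E\cap B(x_0,3k_0)}w$ is finite. I aim to show $M(w\Chi_E)<\infty$ almost everywhere on each $B(x_0,k)$ for $k\geq k_0$, which together cover $X$.

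Fix $k\geq k_0$ and set $f_k=w\Chi_{E\cap B(x_0,3k)}\in L^1(X)$ by the first part. By the weak-$(1,1)$ bound for the maximal operator on doubling metric spaces, standard via Vitali's covering lemma, $Mf_k$ is almost everywhere finite on $X$. For $x\in B(x_0,k)$ and a ball $B=B(z,r)\ni x$, I would split according to $r$. If $r\leq k$, the triangle inequality gives $B\subset B(x_0,3k)$, whence
\[
\frac{1}{\mu(B)}\int_{B\cap E}w\dd\mu=\frac{1}{\mu(B)}\int_{B}f_k\dd\mu\leq Mf_k(x).
\]
If $r>k$, the same triangle inequality still places $B\subset B(x_0,3r)$, while a comparison of radii together with the doubling condition yields $\mu(B(x_0,3r))\leq C_d^3\mu(B)$. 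Applying \eqref{A1tilde} to the ball $B(x_0,3r)$ and using the monotonicity of $\essinf$ under set inclusion (since $B(x_0,3r)\supset B(x_0,3k_0)$ as $r>k\geq k_0$), I obtain
\[
\frac{1}{\mu(B)}\int_{B\cap E}w\dd\mu\leq C_d^{3}\normptilde{w}{1}\essinf_{E\cap B(x_0,3r)}w\leq C_d^{3}\normptilde{w}{1}L_0.
\]
Taking the supremum over all balls $B\ni x$ gives $M(w\Chi_E)(x)\leq Mf_k(x)+C_d^3\normptilde{w}{1}L_0<\infty$ almost everywhere on $B(x_0,k)$, and a countable union over $k\geq k_0$ finishes the proof.

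The main obstacle is that \eqref{A1tilde} produces information about $w$ only through averages and essential infima over $B\cap E$, so the standard weak-$(1,1)$ argument (applied to a truncation of $w\Chi_E$ to an $L^1$ function) controls balls of bounded radius only. The additional input required for balls of large radius is the existence of a fixed large reference ball around $x_0$ intersecting $E$ in positive measure, which together with doubling and \eqref{A1tilde} yields a uniform bound independent of which large ball $B$ is chosen.
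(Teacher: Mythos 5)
Your proof is correct, and it takes a genuinely different route from the paper's. The paper handles $x\in E$ and $x\in X\setminus E$ separately: on $E$ it invokes Proposition \ref{thm:maxim1} directly, and outside $E$ it uses the Lebesgue differentiation theorem to control small balls around $x$, then for large balls it locates a nearby point $y_0$ of the good set $A\subset E$ inside $5B$ and applies the pointwise bound $m_Ew(y_0)\leq\normptilde{w}{1}w(y_0)$. You instead treat all $x$ uniformly with an exhaustion $X=\bigcup_k B(x_0,k)$: for small-radius balls you truncate $w\Chi_E$ to the $L^1(X)$ function $f_k=w\Chi_{E\cap B(x_0,3k)}$ and use the weak-$(1,1)$ maximal inequality (which is essentially Proposition \ref{lem:weak} with $E=X$, $q=1$, $v\equiv 1$), while for large-radius balls you compare $B$ to a fixed reference ball centered at $x_0$ containing $B$, and apply \eqref{A1tilde} together with the doubling estimate $\mu(B(x_0,3r))\leq C_d^3\mu(B)$ and the monotonicity of $\essinf$. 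Your approach avoids Proposition \ref{thm:maxim1} and the Lebesgue differentiation step altogether, at the cost of invoking the weak-$(1,1)$ bound, which the paper uses only implicitly through the differentiation theorem; the paper's version is somewhat more delicate (the choice of $y_0$ near $x$ inside $5B$), but both arguments rest on the same basic combination of local integrability, doubling, and the $\widetilde{A}_1$ condition. One small presentational point: the existence and finiteness of $L_0$ should be flagged as coming from $\mu(E\cap B(x_0,k_0))>0$ together with $w<\infty$ a.e., which you do state but which is the crucial point making the large-radius bound uniform.
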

\begin{proof}
It is immediate that $w\Chi_E \in L^1_{\loc}(X)$ because $\int_{B \cap E} w$ is finite for every ball $B \subset X.$ As for the second statement, Proposition \ref{thm:maxim1} implies that $M( w \Chi_E )(x) = m_Ew(x) <\infty$ for a.~e. $x\in E.$ It remains to verify that $M( w \Chi_E )<\infty$ on $X\setminus E.$ Defining 
$$
A=\lbrace y\in E \mathbin{:} m_Ew(y)\leq \normptilde{w}{1} w(y)< \infty \rbrace,
$$ 
Proposition \ref{thm:maxim1} shows that $\mu(E \setminus A) =0,$ and therefore $M( w \Chi_E ) = M( w \Chi_A )$ on $X.$ For a.~e. $x\in X\setminus A,$ the Lebesgue differentiation theorem states that
$$
\lim_{\substack{B\ni x\\ r(B) \to 0}} \frac{1}{\mu(B)}\int_B w \Chi_A \dd \mu= w \Chi_A(x)=0,
$$
and thus there exists a radius $r_x>0$ such that 
$$
\sup_{\substack{B\ni x\\ r(B) \leq r_x }}\frac{1}{\mu(B)}\int_B w \Chi_A \dd \mu \leq 1.
$$
For almost every $x\notin A,$ we estimate the averages over balls $B$ such that $x\in B$, $r(B)>r_x$, and $B \cap A \neq \emptyset.$ For such a ball $B$ it clearly holds that $\dist(x,A)\leq 2 r(B),$ and consequently $r(B) \geq \max \lbrace r_x, \dist(x,A)/2 \rbrace.$ Also, there exists a point $y_0=y_0(x) \in A$ such that $\dist(x,y_0) < \max \lbrace r_x, 2d(x,A) \rbrace.$ Denoting by $z$ the center of $B$, we have 
$$
\dist(y_0,z) \leq \dist(y_0,x)+ \dist(x,z)<  \max \lbrace r_x, 2\dist(x,A) \rbrace + r(B) \leq 4r(B)+ r(B)=5r(B),
$$
and hence $y_0 \in 5B.$ Using the doubling condition for $\mu$ and the definition of $A,$ we obtain
$$
\frac{1}{\mu(B)}\int_B w \Chi_A \dd \mu \leq C(C_d) \frac{1}{\mu(5B)}\int_{5B} w \Chi_A \dd \mu \leq C(C_d) \sup_{B'\ni y_0}\frac{1}{\mu(B')}\int_{B' \cap A} w \dd \mu \leq C(C_d) \normptilde{w}{1} w(y_0).
$$
We conclude that $M( w \Chi_A )(x)< \infty$ for almost every $x\in X.$ 
\end{proof}

In the following two technical lemmas, we will not be using the fact that the measure is doubling.
\begin{lemma}\label{inequalitiesexponents}
Let $E\subset X$ be a measurable set with $\mu(E)>0$. If $p, q >1,$ $v \in\widetilde{A}_p(E),$ and $0\leq \delta \leq \min \lbrace 1, (q-1)(p-1)^{-1} \rbrace$, then $v^\delta \in \widetilde{A}_q(E)$ with $\normptilde{v^\delta}{q} \leq \normptilde{v}{p}^\delta $. Also, if $q\geq 1,$ $v \in\widetilde{A}_1(E),$ and $\delta\in [0,1],$ then $v^\delta \in \widetilde{A}_q(E)$ with $\normptilde{v^\delta}{q} \leq \normptilde{v}{1}^\delta $. In particular, $\widetilde{A}_p(E) \subset \widetilde{A}_q(E)$ for every $1 \leq p \leq q.$
\end{lemma}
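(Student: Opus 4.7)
The plan is to reduce both claims to applications of Hölder's inequality on $B\cap E$, exploiting that the exponents produced by the ansatz $\delta \le 1$ and $\delta(p-1)/(q-1) \le 1$ are sub-unital, together with the harmless observation that $\mu(B\cap E)/\mu(B) \le 1$.

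For the first statement, fix a ball $B \subset X$ and consider the two factors
$$
\mathrm{(I)} = \frac{1}{\mu(B)}\int_{B\cap E} v^{\delta} \dd\mu, \qquad \mathrm{(II)} = \frac{1}{\mu(B)}\int_{B\cap E} v^{-\delta/(q-1)} \dd\mu.
$$
Set $s = \delta(p-1)/(q-1)$; the hypothesis $0 \le \delta \le (q-1)/(p-1)$ gives $s \in [0,1]$. Writing $v^{\delta} = (v)^{\delta}\cdot 1$ and applying Hölder's inequality with conjugate exponents $1/\delta$ and $1/(1-\delta)$, I obtain
$$
\mathrm{(I)} \le \left(\frac{1}{\mu(B)}\int_{B\cap E} v \dd\mu\right)^{\delta}\left(\frac{\mu(B\cap E)}{\mu(B)}\right)^{1-\delta} \le \left(\frac{1}{\mu(B)}\int_{B\cap E} v \dd\mu\right)^{\delta}.
$$
An analogous application of Hölder with exponents $1/s$ and $1/(1-s)$, using $v^{-\delta/(q-1)} = (v^{-1/(p-1)})^{s}\cdot 1$, yields
$$
\mathrm{(II)}^{q-1} \le \left(\frac{1}{\mu(B)}\int_{B\cap E} v^{-1/(p-1)} \dd\mu\right)^{s(q-1)} = \left(\frac{1}{\mu(B)}\int_{B\cap E} v^{-1/(p-1)} \dd\mu\right)^{\delta(p-1)}.
$$
Multiplying these estimates and raising the bracketed quantity to the power $\delta$ gives $\mathrm{(I)}\cdot\mathrm{(II)}^{q-1} \le \normptilde{v}{p}^{\delta}$. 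Taking the supremum over balls proves $\normptilde{v^{\delta}}{q} \le \normptilde{v}{p}^{\delta}$.

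For the second statement, with $v \in \widetilde{A}_1(E)$, the bound on $\mathrm{(I)}$ is identical, but $\mathrm{(II)}$ must be handled differently because there is no $L^{1/(p-1)}$ reverse term available. Instead I use the lower bound inherent in the $\widetilde{A}_1$ condition: almost everywhere on $B \cap E$,
$$
v \ge \essinf_{B\cap E} v \ge \frac{1}{\normptilde{v}{1}\,\mu(B)}\int_{B\cap E} v \dd\mu,
$$
so that $v^{-\delta/(q-1)} \le \bigl(\normptilde{v}{1}\,\mu(B)/v(B\cap E)\bigr)^{\delta/(q-1)}$ a.e.\ on $B\cap E$. Integrating over $B \cap E$, dividing by $\mu(B)$, using $\mu(B\cap E)/\mu(B)\le 1$, and raising to the power $q-1$ yields $\mathrm{(II)}^{q-1} \le \bigl(\normptilde{v}{1}\,\mu(B)/v(B\cap E)\bigr)^{\delta}$. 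Combined with the estimate for $\mathrm{(I)}$, the $v(B\cap E)/\mu(B)$ factors cancel exactly, leaving $\mathrm{(I)}\cdot\mathrm{(II)}^{q-1} \le \normptilde{v}{1}^{\delta}$, as required.

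The monotonicity assertion $\widetilde{A}_p(E) \subset \widetilde{A}_q(E)$ for $p \le q$ follows at once by choosing $\delta = 1$ in either statement. There is no real obstacle here; the one subtle point is simply to verify that the sub-unital exponent $s = \delta(p-1)/(q-1)$ is exactly what the hypothesis on $\delta$ supplies, and to keep track that the normalization is by $\mu(B)$ rather than $\mu(B\cap E)$, which only produces factors $\le 1$ that may be discarded.
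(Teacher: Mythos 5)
Your proposal is correct and takes essentially the same route as the paper: both rely on the elementary Hölder estimate $\int_A h^s \dd\mu \leq \mu(A)^{1-s}\bigl(\int_A h\dd\mu\bigr)^s$ for $0\leq s\leq 1$ (which the paper states as its equation \eqref{fromHolder}) applied on $B\cap E$, after which the harmless factors $\mu(B\cap E)/\mu(B)\leq 1$ are discarded. The arithmetic of exponents ($s=\delta(p-1)/(q-1)$, the identity $s(q-1)=\delta(p-1)$) is the same, and your treatment of the $\widetilde{A}_1$ hypothesis via the pointwise lower bound $v\geq v(B\cap E)/(\normptilde{v}{1}\mu(B))$ is equivalent to the paper's bound of the integrand by $(\essinf_{B\cap E}v)^{-\delta/(q-1)}$.

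One small omission: in the second assertion the range is $q\geq 1$, and your factor $\mathrm{(II)}$ (which involves the exponent $1/(q-1)$) is not defined when $q=1$. The paper dispatches $q=1$ separately in one line by noting that the Hölder estimate on $\mathrm{(I)}$, together with the $\widetilde{A}_1$ inequality, immediately gives $\frac{1}{\mu(B)}\int_{B\cap E}v^\delta\dd\mu\leq\normptilde{v}{1}^\delta\essinf_{B\cap E}v^\delta$. You should add that remark to make the $q\geq 1$ claim complete.
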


\begin{proof}
We will use the following basic estimate. Let $A \subset X$ be measurable, $0 \leq s \leq 1$, and $h \in L^1(A)$. Then it follows from Hölder's inequality that
\begin{equation}\label{fromHolder}
\int_A h^s \dd\mu  \leq \mu(A)^{1-s} \left( \int_A h \dd\mu  \right)^s.
\end{equation}
Since the exponents $\delta$ and $\delta(p-1)(q-1)^{-1}$ are in $[0,1],$ we can apply \eqref{fromHolder} to obtain
\begin{align}
\label{tmpp}& \int_{B \cap E} v^\delta \dd\mu \leq \mu(B\cap E)^{1-\delta} \left( \int_{B \cap E} v \dd\mu \right)^\delta,\\
\label{tmppp} & \int_{B \cap E} \left( \frac{1}{v^\delta}\right)^\frac{1}{q-1} \dd\mu = \int_{B \cap E} \left( \frac{1}{v}\right)^\frac{\delta (p-1)}{(q-1)(p-1)} \dd\mu 
 \leq \mu(B\cap E)^{1-\frac{\delta (p-1)}{q-1}} \left( \int_{B \cap E} \left( \frac{1}{v}\right)^\frac{1}{p-1} \dd\mu \right)^{\frac{\delta (p-1)}{q-1}}.
\end{align}
Since the exponents $1-\delta$ and $(q-1)-\delta (p-1)$ are nonnegative, we have $\mu(B\cap E)^{1-\delta} \leq \mu(B)^{1-\delta}$ and $\mu(B\cap E)^{1-\delta (p-1)(q-1)^{-1}} \leq \mu(B)^{1-\delta (p-1)(q-1)^{-1}}.$ Then \eqref{tmpp} and \eqref{tmppp} lead to
\begin{align*}
& \frac{1}{\mu(B)^q}\int_{B \cap E} v^\delta \dd\mu  \left( \int_{B \cap E} \left( \frac{1}{v^\delta}\right)^\frac{1}{q-1} \dd\mu \right)^{q-1} \\
& \hspace*{2em} \leq \frac{\mu(B )^{q-\delta p} }{\mu(B)^q} \left( \int_{B \cap E} v \dd\mu \right)^\delta  \left( \int_{B \cap E} \left( \frac{1}{v}\right)^\frac{1}{p-1} \dd\mu \right)^{\delta (p-1)} \leq \normptilde{v}{p}^\delta,
\end{align*}
which proves the statement for $p>1$. In the case $\delta \in [0,1],$ $v\in \widetilde{A}_1(E),$ and $q > 1,$ using first \eqref{fromHolder} and then the definition of $\widetilde{A}_1(E)$ we can write
\begin{align*}
& \frac{1}{\mu(B)^q} \int_{B \cap E} v^\delta \dd\mu  \left( \int_{B \cap E} \left( \frac{1}{v^\delta}\right)^\frac{1}{q-1} \dd\mu \right)^{q-1} \leq \frac{\mu(B \cap E)^{1-\delta}}{\mu(B)^q}\left( \int_{B \cap E} v \dd\mu \right)^\delta \frac{\mu(B \cap E) ^{q-1}}{\essinf_{ B \cap E} v^\delta} \\
&\hspace*{2em} \leq \normptilde{v}{1}^\delta \left( \frac{\mu(B \cap E)}{\mu(B)} \right)^{q-\delta} \leq \normptilde{v}{1}^\delta,
\end{align*}
where we have used the fact that $ \delta \leq 1.$ For $q=1$, the result follows immediately from \eqref{fromHolder}.
\end{proof}

\begin{lemma}\label{estimateAqeveryg} Let $E\subset X$ be a measurable set with $\mu(E)>0.$ If $1\leq q<\infty,$ $v \in \widetilde{A}_q(E)$ and $g\in L^q(E, v),$ then for every ball $B\subset X$ we have 
$$
v(B \cap E) \left( \frac{1}{\mu(B)} \int_{B \cap E}\abs{g}  \dd\mu \right)^q \leq \normptilde{v}{q} \int_{B \cap E} \abs{g}^q v \dd \mu.
$$
\end{lemma}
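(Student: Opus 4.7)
The plan is to split into the cases $q=1$ and $q>1$, which correspond to the two distinct forms of the $\widetilde{A}_q(E)$ condition. In both cases the argument is a direct computation; the role of the $\widetilde{A}_q$ hypothesis is essentially just to allow us to recognize the quantity we produce at the end.

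For $q>1$, the natural move is to apply Hölder's inequality to the integral of $|g|$ with the factorization $|g| = (|g|\, v^{1/q}) \cdot v^{-1/q}$ and the conjugate exponents $q$ and $q/(q-1)$. This yields
\begin{equation*}
\int_{B\cap E} |g| \dd\mu \leq \left( \int_{B\cap E} |g|^q v \dd\mu \right)^{1/q} \left( \int_{B\cap E} v^{-1/(q-1)} \dd\mu \right)^{(q-1)/q}.
\end{equation*}
Raising this to the $q$-th power, multiplying by $v(B\cap E)/\mu(B)^q$, and regrouping the factors of $\mu(B)$, I obtain on the right-hand side the expression
\begin{equation*}
\left( \frac{1}{\mu(B)}\int_{B\cap E} v \dd\mu \right)\left( \frac{1}{\mu(B)}\int_{B\cap E} v^{-1/(q-1)} \dd\mu\right)^{q-1} \int_{B\cap E} |g|^q v \dd\mu,
\end{equation*}
and the first two factors are bounded by $\normptilde{v}{q}$ by definition. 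This yields exactly the claimed inequality.

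For $q=1$, no Hölder is needed. By the definition of $\widetilde{A}_1(E)$ one has
\begin{equation*}
\frac{v(B\cap E)}{\mu(B)} \leq \normptilde{v}{1} \essinf_{B\cap E} v,
\end{equation*}
and multiplying both sides by $\int_{B\cap E} |g| \dd\mu$ gives
\begin{equation*}
v(B\cap E) \cdot \frac{1}{\mu(B)}\int_{B\cap E} |g| \dd\mu \leq \normptilde{v}{1} \essinf_{B\cap E} v \cdot \int_{B\cap E} |g|\dd\mu \leq \normptilde{v}{1} \int_{B\cap E} |g| v \dd\mu.
\end{equation*}

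I do not expect any real obstacle here. The only points to be a little careful about are matching the exponents so that $v^{-1/(q-1)}$ appears precisely as in \eqref{Aptilde}, and verifying that $g v^{1/q} \in L^q(B\cap E,\mu)$ and $v^{-1/q} \in L^{q/(q-1)}(B\cap E,\mu)$ so that Hölder is applicable; the former is the hypothesis $g \in L^q(E,v)$, and the latter follows from the finiteness of the $\widetilde{A}_q$ integrand on the single ball $B$.
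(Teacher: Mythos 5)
Your proof is correct and follows essentially the same route as the paper's: the Hölder inequality with the factorization $|g| = (|g|\,v^{1/q})\cdot v^{-1/q}$ for $q>1$, and a direct application of \eqref{A1tilde} for $q=1$. The only cosmetic difference is that the paper compresses the regrouping of $\mu(B)$-factors and the appeal to the $\widetilde A_q$ definition into a single displayed chain, whereas you spell it out; the final remark about the applicability of Hölder is harmless but unnecessary, since Hölder's inequality holds without any integrability assumption.
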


\begin{proof}
We may and do assume $g \geq 0.$ In the case $q>1,$ applying Hölder's inequality we readily obtain
$$
 \left( \frac{1}{\mu(B)} \int_{B \cap E} g \dd\mu \right)^q \leq \frac{1}{\mu(B)^q} \int_{B \cap E} g^q v \dd\mu \left( \int_{B\cap E} \left( \frac{1}{v} \right)^{\frac{1}{q-1}} \dd\mu \right)^{q-1} \leq  \frac{\normptilde{v}{q}}{v(B\cap E)}\int_{B \cap E} g^q v \dd\mu .
$$
When $q=1,$ the assertion follows immediately from the definition of $\widetilde{A}_1(E)$ \eqref{A1tilde}.  
\end{proof}

The Hardy-Littlewood maximal function is well known to satisfy a weak type inequality. The following lemma provides a version for the maximal function relative to a subset. Notice that by letting $E=X$, we recover the classical result.
\begin{propo}\label{lem:weak}
Let $E\subset X$ be a measurable set with $\mu(E)>0.$ Furthermore, let $1\leq q <\infty$, $v\in \widetilde{A}_q(E)$, $f\in L^q(E, v)$, and $t>0.$ Then 
$$
v\left( \lbrace x\in E \mathbin{:} m_Ef(x) > t \rbrace \right) \leq C t^{-q} \int_E \abs{f}^q v \dd\mu,
$$
where the constant $C$ only depends on $q$, $\normptilde{v}{q}$, and the doubling constant $C_d(\mu)$.  
\end{propo}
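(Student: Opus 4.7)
The plan is to adapt the Vitali-covering proof of the weak $(q,q)$ inequality for the Hardy--Littlewood maximal operator, but to apply Lemma~\ref{estimateAqeveryg} in a way that sidesteps the absence of a weighted doubling property on relative balls. Assume without loss of generality that $f \geq 0$. A standard truncation (replacing $f$ by $f\Chi_{\{f \leq n\} \cap B(x_0, n)}$ and letting $n \to \infty$ at the end by monotone convergence) reduces the problem to $f \in L^\infty \cap L^1$ with compact support, which ensures uniformly bounded radii below. Set $A_t = \{x \in E : m_E f(x) > t\}$. For each $x \in A_t$, the definition of $m_E f$ furnishes a ball $B_x \ni x$ with $\frac{1}{\mu(B_x)}\int_{B_x \cap E} f \dd\mu > t$, and a Vitali $5$-covering argument produces a countable pairwise disjoint subfamily $\{B_i\}$ satisfying $A_t \subset \bigcup_i 5B_i$.

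The crucial step is to apply Lemma~\ref{estimateAqeveryg} on each \emph{enlarged} ball $5B_i$ but with the restricted function $f\Chi_{B_i}$ in place of $f$. This localizes the right-hand side of the lemma to $B_i \cap E$ while still bounding the weight $v(5B_i \cap E)$ on the left, circumventing the need for any weighted doubling on relative balls. Since $B_i \subset 5B_i$, the averaged integral of $f\Chi_{B_i}$ over $5B_i \cap E$ equals $\mu(5B_i)^{-1}\int_{B_i \cap E} f \dd\mu$, which by the choice of $B_i$ and the doubling bound $\mu(5B_i) \leq C_d^3 \mu(B_i)$ is at least $t/C_d^3$. The lemma then gives
\[
v(5B_i \cap E)\left(\frac{t}{C_d^3}\right)^q \leq \normptilde{v}{q}\int_{B_i \cap E} f^q v \dd\mu,
\]
and summing over the pairwise disjoint family $\{B_i\}$ yields
\[
v(A_t) \leq \sum_i v(5B_i \cap E) \leq \normptilde{v}{q} C_d^{3q} t^{-q} \int_E f^q v \dd\mu,
\]
with $C = \normptilde{v}{q} C_d^{3q}$ depending only on $q$, $\normptilde{v}{q}$, and $C_d$.

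The potential obstacle, which the standard Euclidean recipe resolves using weighted doubling of the weight to pass from $5B_i$ back to $B_i$, is precisely the lack of any such doubling on the relative balls. The workaround described above---restricting the test function of Lemma~\ref{estimateAqeveryg} to $B_i$ before invoking the lemma---neatly bypasses this, leaving an integral already supported in $B_i$, so that the pairwise disjointness of $\{B_i\}$ alone suffices to sum up the contributions without invoking any bounded overlap of the enlarged balls.
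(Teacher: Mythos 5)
Your core argument is the one the paper uses: extract a disjoint Vitali family $\{B_i\}$ with $A_t \subset \bigcup_i 5B_i$, then invoke Lemma~\ref{estimateAqeveryg} on the enlarged ball $5B_i$ with the localized function $g = f\Chi_{B_i \cap E}$, so that the left-hand side controls $v(5B_i \cap E)$ while the right-hand side is supported in $B_i \cap E$, and disjointness of $\{B_i\}$ suffices to sum. That is precisely the paper's mechanism and it is correct.

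The one place you gloss over something real is the step that makes Vitali applicable. You truncate $f$ to $f_n = f\Chi_{\{f\le n\}\cap B(x_0,n)}$ and assert that this ``ensures uniformly bounded radii.'' That does not follow on its own: the balls $B_x$ witnessing $m_E f_n(x) > t$ have $\mu(B_x) < t^{-1}\|f_n\|_1$, but to convert a measure bound into a radius bound you need the additional (true, but not free) fact that either $X$ is bounded, or $\mu(B(x_0, r)) \to \infty$ as $r \to \infty$ in an unbounded doubling space, together with the observation that each $B_x$ must meet $\supp f_n$. As written, your proof is missing this justification. The paper sidesteps it entirely by first estimating the restricted maximal function $m_E^R f$ (supremum over balls of radius $\le R$), for which the Vitali family trivially has radii $\le R$, and then letting $R \to \infty$ via monotone convergence; that is cleaner and I would recommend that route. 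Your $n \to \infty$ limiting argument is otherwise fine, since $f_n \nearrow f$ gives $m_E f_n \nearrow m_E f$ and hence $\{m_E f_n > t\} \nearrow \{m_E f > t\}$.
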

\begin{proof}
We may assume $f \geq 0.$ We restrict the supremum defining the maximal function to balls with radius no greater than $R$, and denote the resulting function by $m_E^R f.$ Once we show the estimate for $m_E^R$, the statement will follow by the monotone convergence theorem.

For every $x\in E_t=\lbrace x\in E \mathbin{:} m_E^R f(x)>t \rbrace,$ there is a ball $B_x\ni x$ such that $\rad(B_x)\leq R$ and $\int_{B_x \cap E} f \dd\mu >t \mu(B_x).$ The set $E_t$ is contained in $\bigcup_{x\in E_t}B_x\cap E$. Since the space $X$ is separable, by the Vitali covering lemma we can find a disjoint sequence of balls $\lbrace B_j \rbrace_j$ belonging to this collection such that $\bigcup_{x\in E_t} B_x \subset \bigcup_{j} 5 B_j.$ Now let us write
\begin{equation}\label{baiyang}
\int_{E_t} v  \dd\mu \leq \int_{\bigcup_j (5 B_j \cap E)} v \dd\mu \leq \sum_j \int_{5B_j \cap E} v  \dd\mu .
\end{equation}
For each $j,$ we apply Lemma \ref{estimateAqeveryg} with $B=5 B_j$ and $g=f \Chi_{B_j \cap E}$ to deduce that the sum \eqref{baiyang} is smaller than 
\begin{align*}
& \normptilde{v}{q} \sum_j \int_{5B_j \cap E} g^q v \dd\mu \left( \frac{1}{\mu(5B_j)}\int_{5B_j \cap E} g \dd\mu \right)^{-q} =   \normptilde{v}{q} \sum_j \int_{B_j \cap E} f^q v \dd\mu \left( \frac{1}{\mu(5B_j)}\int_{B_j \cap E} f \dd\mu \right)^{-q}. 
\end{align*}
By the choice of the balls $B_x$, this in turn is smaller than
$$
\normptilde{v}{q} \sum_j \int_{B_j \cap E} f^q v \dd\mu\left( \frac{t \mu(B_j)}{\mu(5B_j)} \right)^{-q} 
\leq \normptilde{v}{q}C(q,C_d)  t^{-q} \int_{\bigcup_j B_j \cap E} f^q v \dd\mu \leq C t^{-q}\int_E f^q v \dd\mu,
$$
where we have applied the doubling property of $\mu$.   
\end{proof}

We next show a strong-type estimate for the maximal function $m_E$ restricted to $E,$ in the space $L^p(E, v),$ provided $v$ is an induced Muckenhoupt weight of a higher class.
\begin{propo}\label{boundedoperator} Let $E\subset X$ be a measurable set with $\mu(E)>0$, $1\leq q < p$, $v\in \widetilde{A}_q(E)$, and $f\in L^p(E, v).$ Then 
$$
\int_E (m_E f)^p v \dd\mu  \leq C \int_E \abs{f}^p v \dd\mu,
$$
where the constant $C$ depends only on $p, q$, $\normptilde{v}{q}$, and the doubling constant $C_d(\mu).$ 
\end{propo}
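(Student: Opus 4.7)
The plan is to derive this strong-type $L^p$ bound by a Marcinkiewicz-type interpolation between the weak-type $(q,q)$ estimate of Proposition \ref{lem:weak} and the trivial $L^\infty \to L^\infty$ bound for $m_E$, applied to truncations of $f$. The metric and induced aspects enter only mildly: the inequality $\mu(B\cap E)\le \mu(B)$ is what makes the $L^\infty$ bound for $m_E$ immediate, while all the hard work has already been done in Proposition \ref{lem:weak}.

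After assuming $f\ge 0$, I would start with the layer-cake representation
$$
\int_E (m_E f)^p\, v\, \dd\mu = p \int_0^\infty t^{p-1}\, v\bigl(\{x\in E : m_E f(x) > t\}\bigr)\, \dd t,
$$
and for each level $t>0$ decompose $f = f_1^t + f_2^t$ with $f_1^t = f\Chi_{\{f>t/2\}}$ and $f_2^t = f\Chi_{\{f\le t/2\}}$. Because $f_2^t\le t/2$ pointwise and $\mu(B\cap E)\le \mu(B)$ for every ball, one has $m_E f_2^t \le t/2$ everywhere, whence
$$
\{m_E f > t\}\subset \{m_E f_1^t > t/2\}.
$$

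Since $f\in L^p(E,v)$ and on $\{f>t/2\}$ we have $f^q\le (t/2)^{q-p}f^p$, the function $f_1^t$ lies in $L^q(E,v)$, so Proposition \ref{lem:weak} applied at exponent $q$ yields
$$
v\bigl(\{m_E f > t\}\bigr) \le C\, (t/2)^{-q} \int_{\{f>t/2\}} f^q\, v\, \dd\mu,
$$
with $C$ depending only on $q$, $\normptilde{v}{q}$, and $C_d$. Substituting this into the layer-cake identity and applying Fubini to swap the order of integration, the inner $t$-integral becomes $\int_0^{2f(x)} t^{p-q-1}\, \dd t = (2f(x))^{p-q}/(p-q)$, where convergence at $0$ is precisely where the hypothesis $q<p$ is used. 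Combining with the leftover factor $f(x)^q$ reconstructs $f(x)^p$, producing the claimed bound with a constant depending only on $p$, $q$, $\normptilde{v}{q}$, and $C_d$.

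The argument is essentially standard; the only points requiring a moment of care are (a) verifying that the truncation $f_1^t$ belongs to $L^q(E,v)$ so that Proposition \ref{lem:weak} is applicable, and (b) checking that the restricted nature of $m_E$ does not spoil the $L^\infty$ truncation estimate. Both follow from the elementary observations recorded above, so I do not anticipate any serious obstacle beyond bookkeeping of constants.
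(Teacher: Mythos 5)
Your argument is essentially identical to the paper's: the same truncation $f = f\Chi_{\{f>t/2\}} + f\Chi_{\{f\le t/2\}}$, the same inclusion $\{m_E f > t\}\subset\{m_E f_1^t > t/2\}$ via subadditivity and the trivial $L^\infty$ bound for $m_E$ (which the paper leaves implicit but you correctly justify from $\mu(B\cap E)\le\mu(B)$), the same appeal to Proposition~\ref{lem:weak} at exponent $q$, and the same Cavalieri/Fubini computation yielding the inner integral $\int_0^{2f(x)} t^{p-q-1}\,\dd t$. The extra remarks you make—checking that $f_1^t\in L^q(E,v)$ so the weak-type estimate applies—are sound clarifications of steps the paper takes for granted, not a different route.
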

\begin{proof}
For simplicity, we again assume that $f \geq 0$, and proceed to write 
$$
f= f \Chi_{\lbrace f >t/2 \rbrace} + f \Chi_{\lbrace f \leq t/2 \rbrace} = f_t + f \Chi_{\lbrace f \leq t/2 \rbrace}.
$$
Using the subadditivity of the maximal function $m_E,$ we have that $m_E f \leq m_E(f_t)+ t/2,$ from which it is clear that the set $\lbrace x\in E \mathbin{:} m_Ef(x) > t \rbrace$ is contained in $ \lbrace x\in E \mathbin{:} m_Ef_t(x) > t/2 \rbrace .$ Combining this observation with Cavalieri's principle for the measure $v \, d\mu$, and then using Proposition \ref{lem:weak} for $q$ and $f_t$, we arrive at
\begin{align*}
& \int_E (m_E f)(x)^p v(x)\dd\mu(x) \leq p  \int_0 ^\infty t^{p-1} v\left( \lbrace x\in E \mathbin{:} m_Ef_t(x)>t/2 \rbrace \right) \dd t\\
& \hspace*{2em} \leq C \int_0^\infty t^{p-q-1} \int_E f_t(x)^q v(x) \dd\mu(x)\dd t = C \int_0 ^\infty t^{p-q-1} \int_{\lbrace x\in E \mathbin{:} f(x)>t/2 \rbrace} f(x)^q v(x) \dd\mu(x) \dd t \\
& \hspace*{2em} = C \int_E f(x)^q v(x) \int_0^{2f(x)} t^{p-q-1} \dd t\dd\mu(x) \leq C \int_E f(x)^p v(x) \dd\mu(x),
\end{align*}
where $C$ depends on $p, q,$ $\normptilde{v}{q}$, and $C_d(\mu).$ 
\end{proof}

The following factorization theorem will be one of the main ingredients in the proof of Theorem \ref{maintheorem}.

\begin{propo}\label{thm:joneses}
Let $E\subset X$ be a measurable set with $\mu(E)>0$, $p>1$, and $v$ a weight on $E$ such that $v^r \in \widetilde{A}_p(E)$ for some $r>1.$ Then there exist weights $v_1, v_2 \in \widetilde{A}_1(E)$ such that $v=v_1 v_2^{1-p}.$ 
\end{propo}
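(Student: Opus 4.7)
The plan is to prove this by adapting Rubio de Francia's iteration scheme to the induced setting, in direct analogy with the classical proof of Jones' factorization theorem. The slack provided by $r > 1$ is precisely what keeps the applications of the maximal inequality strictly below their critical exponents, and this is where I expect the main technical obstacle to lie.

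First I would unpack the hypothesis. Applying Lemma \ref{inequalitiesexponents} with $\delta = 1/r$ to $v^r \in \widetilde{A}_p(E)$ gives $v \in \widetilde{A}_q(E)$ for $q := 1 + (p-1)/r < p$. Since the definition \eqref{Aptilde} is symmetric in $(v, p)$ and in $(\sigma, p')$ where $\sigma := v^{-1/(p-1)}$, the hypothesis is equivalent to $\sigma^r \in \widetilde{A}_{p'}(E)$; a second application of the lemma yields $\sigma \in \widetilde{A}_{q'}(E)$ for $q' := 1 + (p'-1)/r < p'$. Proposition \ref{boundedoperator} then furnishes finite constants $A, B$ with $\|m_E\|_{L^p(E,v) \to L^p(E, v)} \le A$ and $\|m_E\|_{L^{p'}(E,\sigma) \to L^{p'}(E, \sigma)} \le B$.

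I would then consider the nonlinear operator acting on nonnegative functions in $L^p(E, v)$
\[
T h := c_1 \, m_E h + c_2 \, \bigl(v^{-1} m_E(h^{p-1} v)\bigr)^{1/(p-1)},
\]
with constants $c_1, c_2$ to be fixed. The first summand is bounded on $L^p(E, v)$ with norm at most $c_1 A$ by the previous step. For the second, the substitution $g := h^{p-1} v$ together with the identities $v^{1 - p/(p-1)} = \sigma$ and $p'(p-1) = p$ produces
\[
\|T_2 h\|_{L^p(E, v)}^p = c_2^p \int_E \sigma \, m_E(g)^{p'} \dd\mu, \qquad \|h\|_{L^p(E, v)}^p = \int_E \sigma \, g^{p'} \dd\mu,
\]
so the $L^{p'}(E, \sigma)$-boundedness of $m_E$ gives $\|T_2 h\|_{L^p(E, v)} \le c_2 B^{p'/p} \|h\|_{L^p(E, v)}$. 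Setting $c_1 := (4A)^{-1}$ and $c_2 := (4 B^{p'/p})^{-1}$ forces $\|T\|_{L^p(E, v) \to L^p(E, v)} \le 1/2$.

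To finish, I would run Rubio de Francia's iteration. Pick any $f_0 \in L^p(E, v)$ that is positive almost everywhere on $E$, and set $H := \sum_{k=0}^\infty T^k f_0$. The geometric bound $\|T^k f_0\|_{L^p(E, v)} \le 2^{-k} \|f_0\|_{L^p(E, v)}$ together with monotone convergence shows that $H$ is finite and positive a.e.\ on $E$, and satisfies $T H \le H$ pointwise. Nonnegativity of each summand then forces both $c_1 m_E H \le H$ and $c_2 (v^{-1} m_E(H^{p-1} v))^{1/(p-1)} \le H$, i.e.\ $m_E H \le 4A \cdot H$ and $m_E(H^{p-1} v) \le (4 B^{p'/p})^{p-1} H^{p-1} v$. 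Thus $v_2 := H$ and $v_1 := H^{p-1} v$ both belong to $\widetilde{A}_1(E)$, and $v_1 v_2^{1-p} = H^{p-1} v \cdot H^{1-p} = v$, as desired. The most delicate step is the boundedness of $T_2$: the substitution $g = h^{p-1} v$ effectively transports the estimate to $L^{p'}(E, \sigma)$, and the assumption $r > 1$ is precisely what secures the strict inclusion $\sigma \in \widetilde{A}_{q'}(E)$ with $q' < p'$ needed to invoke Proposition \ref{boundedoperator} there.
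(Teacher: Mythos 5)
Your scheme is the paper's: reduce to boundedness of $m_E$ on $L^p(E,v)$ and $L^{p'}(E,\sigma)$ via Lemma \ref{inequalitiesexponents} and Proposition \ref{boundedoperator}, then run Rubio de Francia iteration with a two-term operator whose fixed-point inequality yields the two $\widetilde{A}_1$ factors. The only presentational difference is that you iterate in the weighted space $L^p(E,v)$ whereas the paper iterates in the unweighted $L^p(E)$; setting $\eta = v^{1/p}H$ translates one into the other and your $v_1 = H^{p-1}v,\ v_2 = H$ are literally the paper's $v_1 = v^{1/p}\eta^{p/p'},\ v_2 = v^{-1/p}\eta$.

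There is, however, a genuine gap that you do not address. The inequality $TH \le H$ requires $T$ to be (countably) subadditive, and the second summand $T_2h = c_2\bigl(v^{-1}m_E(h^{p-1}v)\bigr)^{1/(p-1)}$ is subadditive precisely by Minkowski's inequality for the exponent $p-1$, which requires $p-1\ge 1$, i.e.\ $p\ge 2$. For $1<p<2$ the map $h\mapsto \bigl(v^{-1}m_E(h^{p-1}v)\bigr)^{1/(p-1)}$ is \emph{not} subadditive (the relevant ``$L^{p-1}$'' quantity is no longer a norm), so $TH\le H$ fails and the argument breaks down. You flag the boundedness of $T_2$ as the most delicate step, but that part is fine; the hidden obstruction is subadditivity. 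The paper resolves this explicitly: for $1<p<2$ one observes $p'>2$, factorizes $v^{1-p'} = v_1 v_2^{1-p'}$ with $v_1,v_2\in\widetilde{A}_1(E)$ by the $p\ge 2$ case, and raises the identity to the power $1/(1-p')$ to recover $v = v_2 v_1^{1-p}$. Your proof needs this (or an equivalent) duality reduction to cover $1<p<2$.
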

\begin{proof}
Writing $q_1= r^{-1}(p-1) +1,$ we have $1<q_1<p$ and, by virtue of Lemma \ref{inequalitiesexponents}, $v=(v^r)^{1/r} \in \widetilde{A}_{q_1}(E)$ with $\normptilde{v}{q_1} \leq \normptilde{v^r}{p}^{1/r}$. Also, by the hypothesis, the weight $(v^{-r})^{1/(p-1)}$ belongs to $\widetilde{A}_{p'}(E)$ with $\normptilde{(v^{-r})^{1/(p-1)}}{p'} = \normptilde{v^r}{p}^{1/(p-1)}$, where $p'$ is the conjugate exponent of $p$. Applying again Lemma \ref{inequalitiesexponents} for $q_2=r^{-1}(p'-1)+1$ and $\delta=r^{-1},$ we have that $v^{-1/(p-1)} \in \widetilde{A}_{q_2}(E)$ and $\normptilde{v^{-1/(p-1)}}{q_2} \leq \normptilde{(v^{-r})^{1/(p-1)}}{p'}^{1/r} \leq \normptilde{v^r}{p}^{1/r(p-1)}.$ Notice that $q_1<p$ and $q_2<p'$. 

Proposition \ref{boundedoperator} applied first with $v$ and $q_1$, and then with $v^{-1/(p-1)}$ and $q_2$, yields that $m_E$ is a bounded operator both in $L^p(E, v)$ and $L^{p'}\left( E, v^{-1/(p-1)} \right),$ with norms bounded by constants depending only on $r,$ $p$, and $\normptilde{v^r}{p}.$  

Let $v$ be as per the hypothesis and $p\geq 2$, and
$$
Tf = \left(v^{-\frac{1}{p}}m_E\left(v^\frac{1}{p}f^\frac{p}{p'}\right)\right)^\frac{p'}{p} + v^\frac{1}{p}m_E\left(v^{-\frac{1}{p}}f\right). 
$$
This is a bounded operator in $L^p(E)$, which can be verified by  applying Proposition \ref{boundedoperator}:
$$
\int_E \left(v^{-\frac{1}{p}}m_E\left(v^\frac{1}{p}f^\frac{p}{p'}\right)\right)^{\frac{p'}{p}\cdot p} \dd\mu = \int_E m_E\left(v^\frac{1}{p}f^\frac{p}{p'}\right)^{p'}v^{-\frac{1}{p-1}}\dd\mu \lesssim \int_E v^\frac{p'}{p}\abs{f}^pv^{-\frac{1}{p-1}}\dd\mu = \int_E \abs{f}^p\dd\mu,
$$
\begin{align*}
\int_E \left(v^\frac{1}{p}m_E\left(v^{-\frac{1}{p}}f\right)\right)^p\dd\mu = \int_E m_E\left(v^{-\frac{1}{p}}f\right)^p v\dd\mu(x) \lesssim \int_E \abs{f}^p\dd\mu. 
\end{align*}
Fix $f\in L^p(E)$ and set $\eta = \sum_{k=1}^\infty \left(2c\right)^{-k}T^kf$. The series converges absolutely, and by the completeness of $L^p(E)$, we conclude that $\eta\in L^p(E)$. The operator $T$ is subadditive since $p/p'\geq 1$, so 
$$
T\eta \leq \sum_{k=1}^\infty\left(2c\right)^{-k}T^{k+1}f = \sum_{k=2}^\infty\left(2c\right)^{1-k}T^{k}f \leq 2c\eta.
$$
It follows that the weights
$$
v_1 = v^\frac{1}{p}\eta^\frac{p}{p'},\quad v_2 = v^{-\frac{1}{p}}\eta
$$
are in $\widetilde{A}_1(E)$, because 
$$
m_Ev_1 \leq m_E(v^\frac{1}{p}\eta^\frac{p}{p'}) + v^\frac{1}{p}\left( v^\frac{1}{p}m_E\left(v^{-\frac{1}{p}}\eta\right) \right)^\frac{p}{p'} \leq v^\frac{1}{p}\left(T\eta\right)^\frac{p}{p'} \leq (2c)^\frac{p}{p'}v^\frac{1}{p}\eta^\frac{p}{p'} = (2c)^\frac{p}{p'}v_1,
$$
$$
m_Ev_2 = m_E\left(v^{-\frac{1}{p}}\eta\right) \leq v^{-\frac{1}{p}}\left(v^{-\frac{1}{p}}m_E\left(v^\frac{1}{p}\eta^\frac{p}{p'}\right)\right)^\frac{p'}{p} + m_E\left(v^{-\frac{1}{p}}\eta\right) = v^{-\frac{1}{p}}T\eta \leq v^{-\frac{1}{p}}2c\eta = 2cv_2.
$$

In the case $1<p<2$, we instead factorize $v^{1-p'} = v_1v_2^{1-p'}$ as above, and raise this equation to the power $1/(1-p')$. 
\end{proof}

The following proposition is one half of the Coifman-Rochberg characterisation of $A_1$ weights. We will not be needing the reverse statement. 
\begin{propo}\label{thm:maxime} Let $0<\varepsilon<1$, $g$ a nonnegative function such that $g, g^{-1}\in L^\infty(X)$, and $f\in L^1_{\loc}(X)$ a nonnegative function such that $Mf<\infty$ a.~e. in $X$. Then, the weight $g\left(Mf\right)^\varepsilon = w$ belongs to $A_1(X)$.
\end{propo}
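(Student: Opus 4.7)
The plan is to first prove the special case $g \equiv 1$, namely that $(Mf)^\varepsilon \in A_1(X)$, and then deduce the general statement by a one-line reduction. Indeed, since $g$ is bounded above and bounded away from zero by hypothesis, for any ball $B$ the averages of $g(Mf)^\varepsilon$ and of $(Mf)^\varepsilon$ are comparable up to the factor $\norm{g}_\infty$, and likewise for the essential infima, so $w = g(Mf)^\varepsilon$ inherits the $A_1$ condition with constant at most $\norm{g}_\infty \norm{g^{-1}}_\infty$ times the one for $(Mf)^\varepsilon$.

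To handle $(Mf)^\varepsilon$, I would fix a ball $B = B(x_0,r) \subset X$ and decompose $f = f_1 + f_2$ with $f_1 = f \Chi_{2B}$ and $f_2 = f \Chi_{X \setminus 2B}$. Combining the sublinearity $Mf \leq Mf_1 + Mf_2$ with the inequality $(a+b)^\varepsilon \leq a^\varepsilon + b^\varepsilon$, valid because $0 < \varepsilon < 1$, the task reduces to controlling
\[
I_j := \frac{1}{\mu(B)} \int_B (Mf_j)^\varepsilon \dd\mu, \qquad j = 1,2,
\]
by constant multiples of $\essinf_B (Mf)^\varepsilon$. Because $Mf < \infty$ a.e.\ by hypothesis, this essential infimum is finite and meaningful.

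The local piece $I_1$ should yield to Kolmogorov's trick: the weak type $(1,1)$ bound for $M$ is the special case $E = X$, $v \equiv 1$, $q = 1$ of Proposition~\ref{lem:weak}, and cutting the layer-cake integral for $\int_B (Mf_1)^\varepsilon$ at the level $T = \mu(B)^{-1} \int_{2B} f \dd\mu$ produces, after optimizing, the bound
\[
I_1 \lesssim \left( \frac{1}{\mu(B)} \int_{2B} f \dd\mu \right)^\varepsilon \lesssim (Mf(y))^\varepsilon \quad \text{for every } y \in B,
\]
where the second inequality uses the doubling condition to pass from $\mu(B)$ to $\mu(2B)$. Hence $I_1 \lesssim \essinf_B (Mf)^\varepsilon$. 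For the tail $I_2$, I expect to establish the pointwise comparison $Mf_2(x) \leq C(C_d) Mf(y)$ for all $x, y \in B$. The key geometric observation is that any ball $B' = B(x',r')$ with $x \in B'$ and $B' \setminus 2B \neq \emptyset$ must satisfy $r' > r/2$, from which it follows that the enlarged ball $\widetilde{B} := B(x', r' + 2r)$ both contains $B' \cup B$ and is contained in $B(x', 5r')$. The doubling condition then gives $\mu(\widetilde{B})/\mu(B') \lesssim 1$, and therefore
\[
\frac{1}{\mu(B')} \int_{B'} f \dd\mu \leq \frac{\mu(\widetilde{B})}{\mu(B')} \cdot \frac{1}{\mu(\widetilde{B})} \int_{\widetilde{B}} f \dd\mu \leq C(C_d) Mf(y)
\]
for every $y \in B \subset \widetilde{B}$. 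Taking the supremum over admissible $B'$ completes the comparison, and hence $I_2 \lesssim \essinf_B (Mf)^\varepsilon$.

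The principal obstacle is this geometric argument underlying $I_2$: in $\mathbb{R}^n$ one would reason with nested cubes, but in the metric setting one must construct, by hand, a ball that contains both $B'$ and all of $B$ while keeping the measure ratio $\mu(\widetilde{B})/\mu(B')$ bounded in terms of the doubling constant. Once this is in place, the remaining ingredients---Kolmogorov's inequality, the concavity of $t \mapsto t^\varepsilon$, and the reduction to $g \equiv 1$---are routine.
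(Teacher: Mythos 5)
Your proposal is correct and follows essentially the same route as the paper: reduce to $g\equiv 1$ using $g,g^{-1}\in L^\infty$, split $f$ into a piece supported on a dilate of $B$ and a tail, handle the local piece by Cavalieri together with the weak $(1,1)$ estimate (Proposition~\ref{lem:weak} with $E=X$, $v\equiv 1$), and handle the tail by a geometric comparison showing that any ball $B'$ meeting $B$ and $X\setminus 2B$ has $\rad(B')\gtrsim\rad(B)$, so that a fixed dilate of $B'$ swallows $B$. The only differences from the paper's proof are cosmetic: the paper cuts off at $4B$ instead of $2B$, obtaining $\rad(B)\le\rad(B')$ and $B\subset 4B'$ directly, while you cut off at $2B$, obtaining the weaker $\rad(B')>\rad(B)/2$ and compensating by introducing the intermediate ball $\widetilde{B}=B(x',r'+2r)\subset 5B'$; both yield the needed doubling estimate.
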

\begin{proof}
Since $g, g^{-1}\in L^\infty(X)$, it is enough to show that for every ball $B\subset X$ and $x\in B$
\begin{equation}\label{tempa1}
\frac{1}{\mu(B)}\int_B (Mf)^\varepsilon\dd\mu \leq C(Mf)(x)^\varepsilon,
\end{equation}
where the constant $C$ depends on $\varepsilon$ and the doubling constant $C_d$. For a ball $B\subset X$, write $f = f\Chi_{4B}+f\left(1-\Chi_{4B}\right) = f_1 + f_2$. Then, owing to subadditivity of the maximal function, we have 
\begin{equation*}
\left(Mf\right)^\varepsilon \leq \left(Mf_1\right)^\varepsilon + \left(Mf_2\right)^\varepsilon.
\end{equation*}
Each term is estimated separately. Beginning with $\left(Mf_1\right)^\varepsilon$, by Cavalieri's principle we have
\begin{align}
\nonumber & \frac{1}{\mu(B)}\int_B (Mf_1)(y)^\varepsilon\dd\mu(y) = \frac{1}{\mu(B)}\int_0^\infty \varepsilon t^{\varepsilon-1}\mu\left(\left\{y\in B\mathbin{:} Mf_1(y)>t\right\}\right)\dd t\\
\label{kjh} & \hspace*{2em} = \frac{1}{\mu(B)}\left(\int_0^a\cdots + \int_a^\infty\cdots\right).
\end{align}
The first of these integrals can be estimated simply by 
\begin{equation*}
\frac{1}{\mu(B)}\int_0^a \varepsilon t^{\varepsilon-1}\mu\left(\left\{y\in B\mathbin{:} Mf_1(y)>t\right\}\right)\dd t \leq \frac{1}{\mu(B)}\int_0^a \varepsilon t^{\varepsilon-1}\mu\left(B\right)\dd t = a^\varepsilon.
\end{equation*}
As for the second, Proposition \ref{lem:weak} applied with $E=X$ for $f_1\in L^1(X)$ has it that 
\begin{align*}
& \frac{1}{\mu(B)}\int_a^\infty \varepsilon t^{\varepsilon-1}\mu\left(\left\{y\in B\mathbin{:} Mf_1(y)>t\right\}\right)\dd t 
\leq \frac{1}{\mu(B)}\int_a^\infty \varepsilon t^{\varepsilon-1}\mu\left(\left\{y\in X\mathbin{:} Mf_1(y)>t\right\}\right)\dd t \\
& \hspace*{2em} \leq \frac{1}{\mu(B)}\int_a^\infty \varepsilon t^{\varepsilon-1}\cdot \frac{C(\mu)}{t}\int_X\abs{f_1(y)}\dd\mu(y)\dd t = \frac{C(\mu)\varepsilon}{1-\varepsilon}a^{\varepsilon-1}\frac{1}{\mu(B)}\int_{4B}\abs{f(y)}\dd\mu(y).
\end{align*}
We choose $a = \mu(B)^{-1}\int_{4B}\abs{f}\dd\mu$ and combine the two parts. We may and do assume that $a$ is positive, as otherwise $f=0$ on $B$ and the desired inequality follows immediately. Then \eqref{kjh} becomes
\begin{align*}
& \frac{1}{\mu(B)}\int_B (Mf_1)(y)^\varepsilon\dd\mu(y) \leq \left(\frac{1}{\mu(B)}\int_{4B}\abs{f(y)}\dd\mu(y)\right)^\varepsilon\left(1+\frac{C(\mu)\varepsilon}{1-\varepsilon}\right)\\ 
& \hspace*{2em} =\left(1+\frac{C(\mu)\varepsilon}{1-\varepsilon}\right) \left(\frac{\mu(4B)}{\mu(B)}\frac{1}{\mu(4B)}\int_{4B}\abs{f(y)}\dd\mu(y)\right)^\varepsilon\\
& \hspace*{2em} \leq C(\mu, \varepsilon)\left(\frac{1}{\mu(4B)}\int_{4B}\abs{f(y)}\dd\mu(y)\right)^\varepsilon \leq C(Mf)(x)^\varepsilon,
\end{align*}
where we have used the fact that $\mu$ satisfies the doubling condition \eqref{Doubling}.

On to $\left(Mf_2\right)^\varepsilon$. Let $x,y\in B=B(z,r)$ and let $B'=B(z',r')$ be another ball containing $y.$ Assume first that there exists a point $p\in B' \setminus 4B.$ We claim that $r \leq r'.$ Indeed, otherwise we have $d(y,p) \leq 2r'\leq 2r$ and 
$$
d(y,z) \geq d(z,p)-d(p,y) \geq 4 r- 2r=2r> r,
$$
implying that $y\notin B,$ a contradiction. Using that $r \leq r',$ we have for any $q \in B$
$$
d(q,z') \leq d(q,y) + d(y,z') \leq 2r'+r'=3r',
$$
which shows that $B \subset 4 B'.$ In particular $x\in 4B'$ and we can write
\begin{equation*}
\frac{1}{\mu(B')}\int_{B'}\abs{f_2}\dd\mu \leq \frac{C(\mu)}{\mu(4B')}\int_{4B'}\abs{f_2}\dd\mu \leq C\sup_{B\ni x}  \frac{1}{\mu(B)}\int_{B}\abs{f}\dd\mu = C (Mf)(x). 
\end{equation*}
In the case $B' \subset 4B,$ we have that $\int_{B'}\abs{f_2}\dd\mu=0,$ and the preceding estimate trivially holds. In both cases, the right-hand side does not depend on the choice of $y$, and we have
\begin{equation*}
Mf_2(y) = \sup_{B\ni y}\frac{1}{\mu(B)}\int_{B}\abs{f_2}\dd\mu \leq C (Mf)(x),
\end{equation*}
which completes the proof of the proposition. 
\end{proof}

In order to show that \ref{condi} implies \ref{condii} in Theorem \ref{maintheorem}, we are going to need the self-improving property of classical $A_p(X)$ weights. This is Lemma \ref{thm:simp}, which is straightforward to prove with the following reverse Hölder inequality at hand. 
\begin{propo}\label{thm:revho} Let $1\leq p<\infty$, and $w\in A_p(X)$. Then there exist constants $\delta>0$ and $0<C<\infty$ such that for all balls $B\subset X$ we have
\begin{equation}\label{revho}
\left(\frac{1}{\mu(B)}\int_B w^{1+\delta}\dd\mu \right)^\frac{1}{1+\delta} \leq C\frac{1}{\mu(B)}\int_Bw \dd\mu . 
\end{equation}
\end{propo}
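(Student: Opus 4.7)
The plan is to deduce the reverse Hölder inequality from an $A_\infty$-type consequence of the $A_p$ condition, combined with a Calder\'on--Zygmund stopping-time decomposition and a good-$\lambda$ iteration. Since Lemma \ref{inequalitiesexponents} implies $A_1(X) \subset A_2(X)$ with quantitative control of the constant, it suffices to treat $p > 1$. The first ingredient is the following $A_\infty$ property: for every $\alpha \in (0,1)$, there exists $\beta = \beta(\alpha, p, \normptilde{w}{p}) \in (0,1)$ such that for every ball $B \subset X$ and every measurable $F \subset B$,
$$
\mu(F) \leq \alpha\,\mu(B) \;\Longrightarrow\; w(F) \leq \beta\, w(B).
$$
This is a direct consequence of Lemma \ref{estimateAqeveryg} applied with $E = X$, $v = w$, $q = p$, $g = \Chi_{B \setminus F}$, which yields $w(B)(\mu(B \setminus F)/\mu(B))^p \leq \normptilde{w}{p}\, w(B \setminus F)$. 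Rearranging gives $w(F) \leq w(B)\bigl(1 - (1-\alpha)^p/\normptilde{w}{p}\bigr)$, so one takes $\beta = 1 - (1-\alpha)^p/\normptilde{w}{p}$, which lies strictly below $1$.

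For the main argument, fix a ball $B_0$ and set $\lambda_0 = w(B_0)/\mu(B_0)$. For each $\lambda > \lambda_0$, I would perform a Calder\'on--Zygmund-type decomposition: extract a countable disjoint family $\{B_j^\lambda\}$ of balls, contained in a fixed enlargement $cB_0$ (with $c = c(C_d)$), such that
$$
\lambda < \frac{1}{\mu(B_j^\lambda)}\int_{B_j^\lambda} w\dd\mu \leq K\lambda, \qquad K = K(C_d),
$$
and $w \leq \lambda$ almost everywhere on $B_0 \setminus \bigcup_j B_j^\lambda$. In the absence of dyadic cubes in the metric setting, this decomposition has to be assembled using a truncated maximal function of the form $M^R w(x) = \sup\bigl\{\tfrac{1}{\mu(B)}\int_B w\dd\mu : x \in B,\ \rad(B) \leq R\bigr\}$ for an appropriate $R = R(C_d, \rad(B_0))$, together with the Vitali covering lemma applied to the open level set $\{M^R w > \lambda\} \cap B_0$; the upper bound $K\lambda$ on each average follows from stopping-time maximality combined with the doubling condition on a slightly enlarged concentric ball.

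Given the decomposition, the $A_\infty$ condition applied inside each $B_j^\lambda$ to the set $F = \{x \in B_j^\lambda : w(x) > K'\lambda\}$ --- with $K'$ chosen so that Chebyshev's inequality combined with $\int_{B_j^\lambda} w\dd\mu \leq K\lambda \mu(B_j^\lambda)$ forces $\mu(F) \leq \alpha \mu(B_j^\lambda)$ --- yields $w(F) \leq \beta\, w(B_j^\lambda)$. Summing over $j$, and noting via a reverse-Chebyshev argument that each $w(B_j^\lambda)$ is comparable to $w\bigl(\{w > \lambda/2\} \cap B_j^\lambda\bigr)$, produces a good-$\lambda$ inequality of the form
$$
w\bigl(\{x \in B_0 : w(x) > K'\lambda\}\bigr) \leq C\beta\, w\bigl(\{x \in cB_0 : w(x) > \lambda/2\}\bigr), \qquad \lambda \geq \lambda_0.
$$
Multiplying by $\lambda^{\delta - 1}$ and integrating via Cavalieri's principle for the measure $w\dd\mu$, then choosing $\delta > 0$ so small that the iteration converges, leads to $\int_{B_0} w^{1+\delta}\dd\mu \lesssim \mu(B_0)\lambda_0^{1+\delta}$; this is precisely \eqref{revho} after raising both sides to the power $1/(1+\delta)$.

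The main obstacle I expect is the Calder\'on--Zygmund decomposition in the metric setting: without dyadic cubes, the stopping balls must be built from a Vitali covering of the truncated maximal function's level set, with careful control of their disjointness, containment in the enlargement $cB_0$, and the upper bound $K\lambda$ on averages --- all through the doubling constant $C_d$. A secondary point is transferring the good-$\lambda$ inequality from $cB_0$ back to $B_0$ in the final integration step, which requires either a Giaquinta--Modica-style hole-filling argument or else working throughout with a maximal function localized to sub-balls of $B_0$.
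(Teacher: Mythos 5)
The paper does not actually prove Proposition~\ref{thm:revho}: it cites Str\"omberg--Torchinsky \cite{MR1011673}, Theorem~I.15, so there is no internal proof to compare against. Your plan is a self-contained reconstruction along the standard good-$\lambda$ lines, and in that sense it is more ambitious than the paper itself.

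The first ingredient is sound. Applying Lemma~\ref{estimateAqeveryg} with $E=X$, $q=p$, $v=w$, and $g=\Chi_{B\setminus F}$ indeed yields
$w(B)\left(\mu(B\setminus F)/\mu(B)\right)^p \leq \normptilde{w}{p}\, w(B\setminus F)$,
from which $\mu(F)\leq\alpha\mu(B)$ forces $w(F)\leq\bigl(1-(1-\alpha)^p/\normptilde{w}{p}\bigr)w(B)$; since $\normptilde{w}{p}\geq 1$ and $\alpha<1$, the factor is strictly below $1$. This is a clean way to extract the $A_\infty$ condition from the paper's own machinery, and to reduce $p=1$ to $p>1$ via Lemma~\ref{inequalitiesexponents}.

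The genuine gap is in the Calder\'on--Zygmund decomposition as you state it. In a doubling metric space the Vitali covering lemma produces a countable \emph{disjoint} subfamily $\{B_j^\lambda\}$ whose \emph{five-fold enlargements} $5B_j^\lambda$ cover the level set $\{M^R w>\lambda\}$; the disjoint balls themselves do not cover it. Consequently the claim ``$w\leq\lambda$ a.~e. on $B_0\setminus\bigcup_j B_j^\lambda$'' is not justified, and the step $\{w>K'\lambda\}\cap B_0\subset\bigcup_j B_j^\lambda$ in the good-$\lambda$ argument fails as written. (The Besicovitch covering theorem, which would give a covering family of controlled overlap, is not available in general doubling metric spaces.) The repair is possible but not cosmetic: one has $\{w>K'\lambda\}\cap B_0\subset\bigcup_j 5B_j^\lambda$ modulo a null set, so the $A_\infty$ step must be run on $5B_j^\lambda$. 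For that one needs an upper bound on the average over $5B_j^\lambda$, which requires the truncation radius $R$ to be set up so that $5\rad(B_j^\lambda)$ is still admissible, and one then transfers the $w$-measure back to $B_j^\lambda$ via doubling and the disjointness of $\{B_j^\lambda\}$ before invoking the reverse-Chebyshev comparison $w(B_j^\lambda)\lesssim w(\{w>\lambda/2\}\cap B_j^\lambda)$. Alternatively, one can circumvent the issue entirely by using a dyadic structure (Christ or Hyt\"onen--Kairema cubes), which restores the disjoint-and-covering feature that makes the Euclidean CZ decomposition so clean.

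The second issue --- transferring the good-$\lambda$ inequality from $cB_0$ back to $B_0$ in the Cavalieri integration --- you correctly flag but leave unresolved; it requires either a hole-filling iteration over balls or a carefully localized maximal function. With these two points actually carried out, your argument would essentially reproduce the proof that the paper delegates to the reference.
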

For a proof of Proposition \ref{thm:revho} see \cite{MR1011673}, Theorem I.15.
\begin{lemma}\label{thm:simp} Let $w\in A_p(X)$ with $1\leq p<\infty$. There exists an $\varepsilon>0$ such that $w^{1+\varepsilon}\in A_p(X)$.
\end{lemma}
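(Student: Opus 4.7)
My plan is to apply the reverse Hölder inequality (Proposition \ref{thm:revho}) both to $w$ and to its ``dual'' weight, and then use the power mean inequality to coordinate a single exponent $\varepsilon$.

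\textbf{Case $p>1$.} Observe that $w \in A_p(X)$ if and only if the dual weight $\sigma = w^{-1/(p-1)}$ lies in $A_{p'}(X)$, since the $A_p$ condition
\[
\left(\frac{1}{\mu(B)}\int_B w\dd\mu\right)\left(\frac{1}{\mu(B)}\int_B \sigma\dd\mu\right)^{p-1} \leq [w]_{A_p}
\]
is symmetric upon raising to the power $1/(p-1)$ and interchanging the roles of $w$ and $\sigma$. Therefore Proposition \ref{thm:revho} supplies exponents $\delta_1,\delta_2>0$ and constants $C_1,C_2$ such that for every ball $B\subset X$,
\[
\left(\frac{1}{\mu(B)}\int_B w^{1+\delta_1}\dd\mu\right)^{\frac{1}{1+\delta_1}} \leq C_1\,\frac{1}{\mu(B)}\int_B w\dd\mu,\qquad \left(\frac{1}{\mu(B)}\int_B \sigma^{1+\delta_2}\dd\mu\right)^{\frac{1}{1+\delta_2}} \leq C_2\,\frac{1}{\mu(B)}\int_B \sigma\dd\mu.
\]

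Now fix $0<\varepsilon\leq \min\{\delta_1,\delta_2\}$. By Jensen's inequality (power mean monotonicity), the same inequalities hold with $\delta_i$ replaced by $\varepsilon$, i.e.\ $(\mu(B)^{-1}\int_B w^{1+\varepsilon})^{1/(1+\varepsilon)} \leq C_1 \mu(B)^{-1}\int_B w$ and similarly for $\sigma$. Raising these to the powers $1+\varepsilon$ and $(1+\varepsilon)(p-1)$ respectively and multiplying yields
\[
\left(\frac{1}{\mu(B)}\int_B w^{1+\varepsilon}\dd\mu\right)\left(\frac{1}{\mu(B)}\int_B w^{-\frac{1+\varepsilon}{p-1}}\dd\mu\right)^{p-1} \leq C_1^{1+\varepsilon}C_2^{(1+\varepsilon)(p-1)} [w]_{A_p}^{1+\varepsilon},
\]
after recognizing the bracketed expression as $[w]_{A_p}$ raised to the power $1+\varepsilon$. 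Thus $w^{1+\varepsilon}\in A_p(X)$.

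\textbf{Case $p=1$.} Reverse Hölder gives $\delta>0$ and $C$ with $\mu(B)^{-1}\int_B w^{1+\delta}\dd\mu \leq C (\mu(B)^{-1}\int_B w\dd\mu)^{1+\delta}$. Taking any $0<\varepsilon\leq \delta$ and invoking the $A_1$ condition on the right gives
\[
\frac{1}{\mu(B)}\int_B w^{1+\varepsilon}\dd\mu \leq C'\bigl(\essinf_{B} w\bigr)^{1+\varepsilon} = C'\,\essinf_{B} w^{1+\varepsilon},
\]
so $w^{1+\varepsilon}\in A_1(X)$.

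The only nontrivial ingredient is the duality $w\in A_p\Leftrightarrow w^{-1/(p-1)}\in A_{p'}$, which follows by inspection from the definition; once both reverse Hölder inequalities are available, combining them with power mean monotonicity is mechanical, and I do not foresee a real obstacle.
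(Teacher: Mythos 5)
Your proof is correct and follows essentially the same route as the paper's: apply the reverse Hölder inequality (Proposition \ref{thm:revho}) to both $w$ and the dual weight $w^{-1/(p-1)}$, use Jensen's inequality to shrink to a common exponent $\varepsilon$, and multiply the two estimates. The paper phrases the duality step as $w^{1-p'}\in A_{p'}(X)$, which is your $\sigma\in A_{p'}(X)$, and otherwise the arguments coincide, including the treatment of $p=1$.
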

\begin{proof}Let $\varepsilon>0$ be such that $w$ satisfies the reverse Hölder inequality \eqref{revho} with $\delta=\varepsilon$. If $p=1$, applying the said inequality \eqref{revho} and the $A_1$ condition \eqref{A1} of $w$ we have for any ball $B\subset X$
\begin{equation*}
\frac{1}{\mu(B)}\int_B w^{1+\varepsilon}\dd\mu \leq C\left(\frac{1}{\mu(B)}\int_Bw \dd\mu \right)^{1+\varepsilon}\leq C\bigl(\essinf_{B }w \bigr)^{1+\varepsilon} \leq C\essinf_{B }w^{1+\varepsilon},
\end{equation*}
which implies that $w^{1+\varepsilon}\in A_1(X)$. 

As for $p>1$ we start by observing that, as a consequence of Jensen's inequality, if a weight $v$ satisfies \eqref{revho} for some $\delta>0,$ then $v$ safisfies the same inequality for every $0<\delta'\leq \delta.$ It immediately follows from the $A_p$ condition \eqref{Aptilde} with $E=X$ that $w^{1-p'}\in A_{p'}(X)$ with $\frac{1}{p}+\frac{1}{p'}=1.$ As a consequence, we obtain that both $w$ and $w^{1-p'}$ satisfy a reverse Hölder inequality \eqref{revho} for $\varepsilon>0$ small enough. Together with the fact that $w\in A_p(X)$, this implies
\begin{align*}
& \frac{1}{\mu(B)}\int_B w^{1+\varepsilon}\dd\mu\left(\frac{1}{\mu(B)}\int_B w^{-\frac{1+\varepsilon}{p-1}}\dd\mu\right)^{p-1}\\
& \hspace*{2em} \leq C\left(\frac{1}{\mu(B)}\int_Bw\dd\mu\right)^{1+\varepsilon}\left(\frac{1}{\mu(B)}\int_Bw^{-\frac{1}{p-1}}\dd\mu\right)^{(1+\varepsilon)(p-1)}  \leq C\normptilde{w}{p}^{1+\varepsilon}, 
\end{align*}
which is the $A_p(X)$ condition for $w^{1+\varepsilon}$.
\end{proof}

We are now ready to prove our main result, Theorem \ref{maintheorem}.
\begin{theorem}\label{thm:wolff} Let $X$ be a complete metric space with a doubling measure, $E\subset X$ a measurable set with $\mu(E)>0$, and $w$ a weight on $E$. Then, for $1\leq p<\infty$, the following statements are equivalent. 
\begin{enumerate}[label=\normalfont{(\roman*)}]
\item \label{1.1} There exists a weight $W\in A_p(X)$ such that $W=w$ a.~e. on $E$; 
\item \label{1.2} There exists an $\varepsilon>0$ such that $w^{1+\varepsilon} \in \widetilde{A}_p(E).$ 
\end{enumerate}
\end{theorem}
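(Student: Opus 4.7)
For the direction \ref{1.1} $\Rightarrow$ \ref{1.2}, I would simply invoke the self-improvement Lemma \ref{thm:simp} to obtain $\varepsilon > 0$ with $W^{1+\varepsilon} \in A_p(X)$, and then observe that restricting the two integrals in the $A_p(X)$ condition from $B$ to $B \cap E$ only decreases them; since $W^{1+\varepsilon} = w^{1+\varepsilon}$ a.e.\ on $E$, this immediately yields $w^{1+\varepsilon} \in \widetilde{A}_p(E)$, and the analogous essential infimum bound when $p = 1$.

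The reverse direction \ref{1.2} $\Rightarrow$ \ref{1.1} with $p > 1$ is the main content. My plan is to combine the factorization Prop \ref{thm:joneses} with the Coifman--Rochberg-type construction of Prop \ref{thm:maxime}, exploiting the extra exponent $1+\varepsilon$ as ``room'' to produce an $A_1(X)$ extension that matches $w$, not just a power of $w$, on $E$. I would fix $\varepsilon' \in (0, \varepsilon)$, set $\alpha = 1/(1+\varepsilon') \in (0,1)$ and $r = (1+\varepsilon)/(1+\varepsilon') > 1$, and observe that $(w^{1+\varepsilon'})^r = w^{1+\varepsilon} \in \widetilde{A}_p(E)$. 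Prop \ref{thm:joneses} applied to $v = w^{1+\varepsilon'}$ then produces $u_1, u_2 \in \widetilde{A}_1(E)$ with $w^{1+\varepsilon'} = u_1 u_2^{1-p}$; setting $v_i = u_i^\alpha$ I obtain the factorization $w = v_1 v_2^{1-p}$ in which, crucially, $v_i^{1/\alpha} = u_i$ still lies in $\widetilde{A}_1(E)$.

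To extend the $v_i$ to $A_1(X)$ I would set
\[
\widetilde{V}_i = \bigl( M(u_i \Chi_E) \bigr)^{\alpha}
\]
on $X$. Lemma \ref{remarkfinitenessmaximal} makes $M(u_i \Chi_E)$ locally integrable and a.e.\ finite, and since $\alpha \in (0,1)$, Prop \ref{thm:maxime} (with $g \equiv 1$ and exponent $\alpha$) gives $\widetilde{V}_i \in A_1(X)$. Prop \ref{thm:maxim1} supplies $u_i \leq m_E u_i = M(u_i \Chi_E) \leq \normptilde{u_i}{1} u_i$ a.e.\ on $E$, whence $\widetilde{V}_i \approx u_i^{\alpha} = v_i$ a.e.\ on $E$. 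I would then form $W_0 := \widetilde{V}_1 \widetilde{V}_2^{1-p}$; a short computation, bounding $\widetilde{V}_2^{1-p}$ by $(\essinf_B \widetilde{V}_2)^{1-p}$ and $\widetilde{V}_1^{-1/(p-1)}$ by $(\essinf_B \widetilde{V}_1)^{-1/(p-1)}$ and invoking the two $A_1(X)$ conditions, shows $W_0 \in A_p(X)$, while by construction $W_0 \approx v_1 v_2^{1-p} = w$ a.e.\ on $E$. Defining finally $W = w$ on $E$ and $W = W_0$ on $X \setminus E$ preserves the global comparability $W \approx W_0$, so $W \in A_p(X)$ and $W = w$ a.e.\ on $E$.

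The case $p = 1$ can be handled directly without factorization: set $\alpha = 1/(1+\varepsilon) \in (0,1)$ and $\widetilde{W} = \bigl( M(w^{1+\varepsilon} \Chi_E) \bigr)^{\alpha}$. Prop \ref{thm:maxime} places $\widetilde{W}$ in $A_1(X)$, while Prop \ref{thm:maxim1} applied to $w^{1+\varepsilon} \in \widetilde{A}_1(E)$ forces $\widetilde{W} \approx w^{(1+\varepsilon)\alpha} = w$ a.e.\ on $E$; the same modification on $X \setminus E$ yields the extension. The main obstacle I anticipate is the tension between the two constraints on the exponent $\alpha$: Prop \ref{thm:maxime} requires $\alpha \in (0,1)$, which would ordinarily force the extension to agree only with $w^{\alpha}$ on $E$. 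The hypothesis $w^{1+\varepsilon} \in \widetilde{A}_p(E)$, via its self-improved factorization $w^{1+\varepsilon'} = u_1 u_2^{1-p}$, is precisely what creates enough slack to choose $\alpha$ and the $\widetilde{A}_1(E)$ factors $u_i$ consistently, so that the $\alpha$-th power of $M(u_i \Chi_E)$ lands back on $w$ itself rather than a lesser power.
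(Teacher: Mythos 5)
Your proof is correct and follows essentially the same route as the paper: for \ref{1.1}~$\Rightarrow$~\ref{1.2} both invoke Lemma \ref{thm:simp} and restrict the integrals to $B\cap E$; for \ref{1.2}~$\Rightarrow$~\ref{1.1} with $p>1$, both apply the factorization Proposition \ref{thm:joneses} to an intermediate power $w^{1+\varepsilon'}$ with $0<\varepsilon'<\varepsilon$ (the paper fixes $\varepsilon'=\varepsilon/2$, you leave it free), then extend the two $\widetilde A_1(E)$ factors to $A_1(X)$ by taking $M(\cdot\,\Chi_E)^\alpha$ with $\alpha=1/(1+\varepsilon')\in(0,1)$, using Lemma \ref{remarkfinitenessmaximal}, Proposition \ref{thm:maxime}, and Proposition \ref{thm:maxim1} exactly as the paper does. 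The one genuine (but cosmetic) divergence is in the finishing step: the paper upgrades the comparability $V_1V_2^{1-p}\approx w$ on $E$ to \emph{equality} by multiplying by a function $g$ with $g,g^{-1}\in L^\infty(X)$, exploiting that Proposition \ref{thm:maxime} tolerates such a bounded factor, whereas you keep $g\equiv 1$, obtain $W_0\approx w$ on $E$, and then simply redefine the extension to equal $w$ on $E$, noting that $A_p(X)$ is stable under a.e.\ two-sided comparable modifications. Both resolutions are valid and require the same comparability input; your redefinition route is arguably cleaner since it avoids any bookkeeping with the correction factor's exponents. For $p=1$ you also streamline things by skipping the factorization and using $M(w^{1+\varepsilon}\Chi_E)^{1/(1+\varepsilon)}$ directly, which is precisely what the paper's terse ``take $v_1$ as $v$ and discard $v_2$'' amounts to.
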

\begin{proof}
The implication \ref{1.1} $\Rightarrow$ \ref{1.2} follows from Lemma \ref{thm:simp}. Because $W\in A_p(X)$ for a given $1\leq p<\infty$, there exists an $\varepsilon>0$ such that $W^{1+\varepsilon}\in A_p(X)$. Assume first that $p>1$. Then, for all balls $B\subset X$,
\begin{align*}
& \left(\frac{1}{\mu(B)}\int_{B\cap E}w^{1+\varepsilon}\dd\mu\right)\left(\frac{1}{\mu(B)}\int_{B\cap E}w^\frac{1+\varepsilon}{1-p}\dd\mu\right)^{p-1} \\
& \hspace*{2em} = \left(\frac{1}{\mu(B)}\int_{B\cap E}W^{1+\varepsilon}\dd\mu\right)\left(\frac{1}{\mu(B)}\int_{B\cap E}W^\frac{1+\varepsilon}{1-p}\dd\mu\right)^{p-1}\\
& \hspace*{2em} \leq \left(\frac{1}{\mu(B)}\int_{B}W^{1+\varepsilon}\dd\mu\right)\left(\frac{1}{\mu(B)}\int_{B}W^\frac{1+\varepsilon}{1-p}\dd\mu\right)^{p-1} \leq C. 
\end{align*}
If $p=1,$ it is enough to write
$$
\frac{1}{\mu(B)} \int_{B \cap E} w^{1+ \varepsilon} \dd \mu \leq \frac{1}{\mu(B)} \int_{B} W^{1+ \varepsilon} \dd \mu \leq C \essinf_{B } W^{1+\varepsilon} \leq C \essinf_{B \cap E} W^{1+\varepsilon}=C \essinf_{B \cap E} w^{1+\varepsilon}.
$$

Next, let us prove \ref{1.2} $\Rightarrow$ \ref{1.1}. Let us define the weight $v= w^{1+\frac{\varepsilon}{2}}$ on $E.$ Consider first the case $p>1.$ Because $w^{1+\varepsilon}\in \widetilde{A}_p(E),$ it is clear that $v$ satisfies the hypothesis of Proposition \ref{thm:joneses}, so we can write $v=v_1v_2^{1-p}$ on $E,$ where $v_1, v_2 \in \widetilde{A}_1(E).$ Next, we define 
\begin{equation*}
V_i = M\left(\Chi_Ev_i\right)^\delta, \quad i \in\left\{1,2\right\},\;\delta=\frac{1}{1+\frac{\varepsilon}{2}},
\end{equation*}
where $M$ is the Hardy--Littlewood maximal function, and $\Chi_Ev_i$ is the function in $X$ that coincides with $v_i$ on $E$ and vanishes outside $E.$ These are weights in $A_1(X)$ as per Lemma \ref{remarkfinitenessmaximal} and Proposition \ref{thm:maxime}. Then, $V_1V_2^{1-p}$ is again an $A_p(X)$ weight such that
\begin{equation*}
V_1V_2^{1-p} = \left(m_Ev_1\left(m_Ev_2\right)^{1-p}\right)^\delta 
\end{equation*}
on $E$, with the maximal function $m_E$ restricted to $E$ as per Definition \ref{def:Mf}. The fact that $v_1, v_2\in \widetilde{A}_1(E)$ implies that there is a constant $C=\max \lbrace \normptilde{v_1}{1}, \normptilde{v_2}{1} \rbrace $ such that $v_i\leq m_Ev_i \leq Cv_i$, $i=1,2$, almost everywhere on $E$ (Proposition \ref{thm:maxim1}). Thus there exist nonnegative functions $g_i, \, i=1,2,$ such that $g_i, g_i^{-1} \in L^\infty(X)$ and $g_i m_E v_i = v_i$ almost every where on $E.$ Defining $g= g_1 ^\delta g_2^{\delta(p-1)}$ we see that $g, g^{-1} \in L^\infty(X)$, $g>0$, and
\begin{equation*}
g(x)V_1(x)V_2(x)^{1-p} = \left(v_1(x)v_2(x)^{1-p}\right)^\delta = v(x)^\delta = w(x) 
\end{equation*}
for almost every $x\in E$. The weight $W = gV_1V_2^{1-p}$ is in $A_p(X)$ and satisfies $W=w$ a.~e. on $E$. 

Finally, if $p=1,$ we reproduce the above argument taking $v_1$ as $v$ and discarding the weight $v_2$.
\end{proof}

\section{Balls and chains}\label{section:balls}
The aim of this section is to collect several preparatory results concerning balls in a metric space with a doubling measure. Our reason to delve into the geometry of Whitney-type balls is that they can be used to give estimates for Muckenhoupt weights over chains. In particular, Lemma \ref{lem:whitneybolic} is needed to prove Lemma \ref{lem:hold2} in the next section, which in turn is an integral part of Holden's argument in \cite{MR1162041}. We have found it necessary to provide an explicit proof of Lemma \ref{lem:whitneybolic}, as we could not locate one in the literature.

While most results in this section do not require any additional assumptions, on occasion we need to assume the existence of geodesics joining every pair of points. To cite an example of geodesic spaces relevant to partial differential equations, Corollary 8.3.16 in \cite{MR3363168} states that a complete, doubling metric space that supports a Poincaré inequality admits a geodesic metric that is bilipschitz equivalent to the underlying metric, with 
constant depending on the doubling constant of the measure and the data of the Poincaré inequality.

We say that a complete metric space $(X,\dist)$ is a \emph{geodesic space} provided that any two points $x,y\in X$ can be joined by a continuous, rectifiable curve $\gamma: [a,b] \to X$ with $\dist(x,y)= \ell(\gamma)$, where $\ell(\gamma)$ denotes the length of $\gamma.$ A rectifiable curve $\gamma: [a,b] \to X$ satisfying $\ell(\gamma)=d(\gamma(a), \gamma(b))$ is called a \emph{geodesic} on $X.$ Note that for a general rectifiable curve $\gamma: [a,b] \to X,$ we always have the inequality $\ell(\gamma) \geq d(\gamma(a), \gamma(b)).$ 

We will invoke the following well-known property of geodesics: if $[a',b'] \subset [a,b],$ the subarc $\gamma_{|_{[a',b']}}$ of the geodesic $\gamma: [a,b] \to X$ is a geodesic too. Hence, for any three points $\gamma(t_i)$ on the geodesic $\gamma$ such that $a\leq t_0< t_1<t_2\leq b$, the triangle inequality for $\dist$ becomes an equality:
$$
\dist(\gamma(t_0), \gamma(t_2))= \dist(\gamma(t_0), \gamma(t_1))+ \dist(\gamma(t_1), \gamma(t_2)).
$$
Slightly abusing notation, we write $\gamma_{|_{[x_1,x_2]}}$ to mean $\gamma_{|_{[t_1,t_2]}}$ whenever $\gamma(t_i)=x_i,$ $i=1,2.$ 

Throughout the rest of this section, we will assume that $(X,d, \mu)$ is a complete metric measure space such that $\mu$ satisfies the doubling condition \eqref{Doubling}. Also, when using the notation $A \approx B$ or $A \lesssim B$ for any two real numbers $A, B,$ we understand that the constants involved may depend on the doubling constant $C_d(\mu).$

We begin by showing two lemmas in metric geometry for future reference. In the first one, the measure does not play any role.
\begin{lemma}\label{lem:ballintersection}
Let $X$ be a geodesic space, and $B$, $B'$ any two balls in $X$. Assume that $\rad(B) \lesssim \rad(B')$ and that $B'$ contains the center of $B$. Then there exists a ball $B'' \subset B \cap B'$ with $\rad(B'') \approx \rad(B)$. 
\end{lemma}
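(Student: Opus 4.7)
The plan is to slide along a geodesic from $x$ to $x'$ and pick the center of $B''$ at controlled distance from $x$. Write $B=B(x,r)$ and $B'=B(x',r')$, set $L=\dist(x,x')$, and let $C\ge 1$ be the constant for which $r\le Cr'$; the hypothesis $x\in B'$ forces $L<r'$. Since $X$ is geodesic, fix a geodesic $\gamma\colon [0,L]\to X$ with $\gamma(0)=x$ and $\gamma(L)=x'$. The key idea is to take the center of $B''$ at distance roughly $r/3$ from $x$ along $\gamma$, so as to leave room of order $r$ inside $B$ while simultaneously moving towards $x'$ by the same amount in order to gain room inside $B'$.

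Accordingly, set $t=\min(r/3,L)$, $y=\gamma(t)$, $\rho=\min(r/3,r'/2)$, and $B''=B(y,\rho)$. Using the standard fact that every subarc of a geodesic is a geodesic, one has $\dist(y,x)=t$ and $\dist(y,x')=L-t$. For any $z\in B''$ the triangle inequality then gives $\dist(z,x)<\rho+t\le 2r/3<r$, so $z\in B$. For $B'$-containment I split on the definition of $t$: if $t=r/3$ then $\dist(z,x')<\rho+L-r/3\le L<r'$ because $\rho\le r/3$; if instead $t=L$ then $y=x'$ and $\dist(z,x')<\rho\le r'/2<r'$. Either way $z\in B'$, so $B''\subset B\cap B'$.

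The radius estimate is immediate: $\rho\le r/3$, and using $r'\ge r/C$ one obtains $\rho\ge \min(r/3,r/(2C))\ge r/(2C+3)$, so $\rad(B'')\approx \rad(B)$ with constants depending only on $C$. The only real subtlety is recognising that the geodesic hypothesis is genuinely essential here: in a general (non-geodesic) doubling metric space, $B\cap B'$ can easily fail to contain any subball of radius comparable to $\rad(B)$ (discrete examples or trees where two branches separate show this clearly), so no argument using only the doubling property could succeed. Once geodesics are invoked, the construction is essentially forced, and the rest is routine triangle-inequality bookkeeping with the parameter $t$ tuned precisely to balance the two containment constraints.
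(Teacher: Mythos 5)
Your proof is correct and takes essentially the same approach as the paper: slide from the center of $B$ along a geodesic towards the center of $B'$ by a controlled fraction of $\rad(B)$ and place a ball of comparable radius there. Your parametrization $t=\min(r/3,L)$, $\rho=\min(r/3,r'/2)$ neatly unifies the paper's two cases ($\dist(z,z')\le r/2$ and $\dist(z,z')>r/2$) into a single construction, but the underlying idea is identical.
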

\begin{proof} By assumption, there is a constant $ 0<a \leq 1$ such that $a\rad(B) \leq \rad(B')$. In the first place, assume that $\dist(z,z') \leq \frac{1}{2} \rad(B)$. Let $z$ and $z'$ denote the centers of $B$ and $B'$ respectively. In this case, define $B''$ as the ball centered at $z'$ and of radius $\frac{a}{4}\rad(B)$. Since $a \rad(B) \leq \rad(B')$, it is obvious that $B'' \subset B'$. On the other hand, for any $x\in B''$ we can write
$$
\dist(x,z) \leq \dist(x,z') + \dist(z',z) \leq \tfrac{a}{4}\rad(B) + \tfrac{1}{2}\rad(B) < \tfrac{1}{4}\rad(B) + \tfrac{1}{2}\rad(B) < \rad(B),
$$
which shows that $B'' \subset B$, and we also have $\rad(B'') = \frac{a}{4}\rad(B) \approx \rad(B).$ 

Consider then the case $\dist(z,z') > \frac{1}{2}\rad(B).$ Let $\gamma$ be a continuous curve joining $z$ and $z'$ with $\ell(\gamma)=\dist(z,z')$. Because $ \frac{1}{2}\rad(B)<\dist(z,z') \leq \rad(B),$ there exists a point $p\in \gamma$ such that $\dist(p,z) = \frac{1}{2}\rad(B).$ Let $q \in \gamma$ be the midpoint between $z$ and $p$, that is, $\dist(z,q)=\dist(q,p)=\frac{1}{2}\dist(z,p)$. We define $B''$ as the ball centered at $q$ and radius $ \frac{1}{2}\dist(z,q) $. For any $x\in B''$ we have
$$
\dist(x,z) \leq \dist(x,q)+\dist(q,z) \leq \tfrac{1}{2}\dist(z,q)+\dist(q,z) < 2 \dist(z,q) =d(z,p)= \tfrac{1}{2}\rad(B).
$$
This shows that $B''\subset B.$ To verify that $B'' \subset B'$, notice first that $\dist(z,q)+\dist(q,z')=\dist(z,z')$ as subarcs of the geodesic $\gamma$. Now, for any $x\in B'',$ write
$$
\dist(x,z') \leq \dist(x,q)+\dist(q,z') \leq \tfrac{1}{2}\dist(z,q) + \dist(q,z') < \dist(z,z') \leq \rad(B'),
$$
whereby we conclude that $B''\subset B \cap B'.$ Finally, because $\dist(z,p)=\tfrac{1}{2}\rad(B),$ we have 
$$
\rad(B'') = \tfrac{1}{2}\dist(z,q) = \tfrac{1}{4}\dist(z,p) = \tfrac{1}{8}\rad(B),
$$
which completes the proof of the lemma. 
\end{proof}

\begin{lemma}\label{lem:rmu}
Let $B,B'\subset X$ any two balls such that $\rad(B)\approx\rad(B')$ and $\dist(p,p')\lesssim\rad(B)$ for some $p\in B,$ $p'\in B'.$ Then $\mu(B)\approx\mu(B')$. 
\end{lemma}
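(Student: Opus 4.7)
My plan is a direct application of the doubling condition, exploiting that both balls sit inside a single ball whose radius is a bounded multiple of $\rad(B)$ (equivalently of $\rad(B')$). The main observation is that the centers of $B$ and $B'$ are at controlled distance, so each ball can be enlarged by a uniform factor to contain the other, and then the doubling inequality \eqref{Doubling} can be iterated a bounded number of times.

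More precisely, let $z, z'$ denote the centers of $B$, $B'$, write $r = \rad(B)$, $r' = \rad(B')$, and let $C_1, C_2 \geq 1$ be constants such that $C_1^{-1} r \leq r' \leq C_1 r$ and $\dist(p,p') \leq C_2 r$ for some chosen $p \in B$ and $p' \in B'$. By the triangle inequality,
$$
\dist(z,z') \leq \dist(z,p) + \dist(p,p') + \dist(p',z') \leq r + C_2 r + r' \leq (1 + C_2 + C_1) r.
$$
Setting $K = 2 + C_2 + 2C_1$, one checks that $B' \subset B(z, K r)$: for any $x \in B'$,
$$
\dist(x,z) \leq \dist(x,z') + \dist(z',z) \leq r' + (1+C_2+C_1) r \leq K r.
$$
Trivially $B \subset B(z, K r)$ as well, since $K \geq 1$.

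Now I choose the smallest integer $n$ with $2^n \geq K$; this $n$ depends only on $C_1$ and $C_2$. Iterating the doubling condition $n$ times gives $\mu(B(z, K r)) \leq \mu(2^n B) \leq C_d^n \mu(B)$. Combined with $B' \subset B(z, Kr)$ this yields $\mu(B') \leq C_d^n \mu(B)$. Reversing the roles of $B$ and $B'$ (and enlarging $K$ if necessary, still only by a constant depending on $C_1, C_2$) gives the opposite inequality $\mu(B) \lesssim \mu(B')$, completing the proof. No step here is subtle; the only thing to keep track of is that every constant appearing is controlled solely by $C_1$, $C_2$, and the doubling constant $C_d(\mu)$, so the conclusion $\mu(B) \approx \mu(B')$ holds with constants of the allowed form.
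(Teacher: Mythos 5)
Your proof is correct and follows essentially the same route as the paper's: use the triangle inequality to show that $B'$ is contained in a dilate $\lambda B$ with $\lambda$ controlled by the implicit constants, then invoke the doubling condition to compare measures, and finish by symmetry. The only difference is that you spell out the iteration of \eqref{Doubling} and the constant bookkeeping more explicitly than the paper does, but the idea and structure are identical.
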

\begin{proof}
Let $z_B$ denote the center of $B.$ For any $x\in B'$ we have
\[\dist(x, z_B) \leq \dist(x, p') + \dist(p', p) + \dist(p, z_B) \leq 2 \rad(B') + 2\dist(B,B') + \rad(B) \lesssim \rad(B).
\]
As a result, there exists a constant $1\leq  \lambda <\infty$ such that $\dist(x, z_B) \leq \lambda \rad(B)$ for every $q\in B'$, which means that $B'\subset \lambda B$ and therefore $\mu(B')\leq\mu(\lambda B)$. But $\mu(\lambda B)\lesssim \mu(B)$ because the measure is doubling, so $\mu(B')\lesssim \mu(B)$. Reversing the roles of $B$ and $B'$ gives the inequality in the other direction. 
\end{proof}

For our Whitney decomposition we follow Lemma 2.8 in \cite{MR3183648}, whose proof is based on ideas from \cite{MR1232192}*{Lemma 2} and \cite{MR0499948}*{Theorem 1.3}. See also \cite{MR1373594}*{Lemma 5} and \cite{MR1791462}*{Lemma 1.3.3}.
\begin{lemma}\label{lem:whiba} Let $D\subset X$ be an open, nonempty, proper subset of $X.$ Then there exists a collection $\mathcal{W}(D) = \left\{B_k=B(x_k, r_k)\right\}_k$ of balls with the following properties:
\begin{enumerate}[label=\normalfont{(\roman*)}]
\item \label{coveringdisjoint} the balls $ \lbrace B(x_k, r_k/4) \rbrace_k$ are pairwise disjoint and $\bigcup_kB_k=\bigcup_k 2 B_k=D$; 
\item \label{comparabledistance}$2\rad B_k \leq \dist\left(x, X \setminus D\right)\leq 6\rad B_k$ for every $x\in 2 B_k$;
\item \label{overlappingbounded} for each $B\in \mathcal{W}(D)$, there are at most $N=N(C_d)<\infty$ balls in $\mathcal{W}(D)$ that intersect $B$.
\end{enumerate}
Furthermore, let $B_1, B_2\in \mathcal{W}(D)$ such that $B_1\cap B_2\neq\emptyset$. We have  
\begin{enumerate}[label=\normalfont{(\roman*)}]
\setcounter{enumi}{3}
\item \label{radii} $\frac{1}{4}\rad B_1 \leq \rad B_2 \leq 4 \rad B_1 $;
\item \label{measures} $\mu(B_1)\approx\mu(B_2)$.
\end{enumerate}\hfill\qed
\end{lemma}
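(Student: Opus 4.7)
The plan is to execute the standard Whitney-type construction, adapted to the metric setting with a doubling measure. For each $x\in D$ set $r(x) = \dist(x, X\setminus D)/16$, which is strictly positive because $D$ is open and proper. Since $X$ is separable (a consequence of the doubling condition), every family of pairwise disjoint nonempty open balls in $X$ is countable; indeed each such ball contains a point of any prescribed countable dense subset. Applying Zorn's lemma to the poset, ordered by inclusion, of pairwise disjoint subfamilies of $\{B(x, r(x)) : x\in D\}$, I extract a maximal, and hence countable, pairwise disjoint subfamily $\{B(x_k, r(x_k))\}_k$. Define $r_k = 4 r(x_k) = \dist(x_k, X\setminus D)/4$ and $B_k = B(x_k, r_k)$, so that $B(x_k, r_k/4) = B(x_k, r(x_k))$ and the disjointness part of \ref{coveringdisjoint} is immediate.

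For the covering part of \ref{coveringdisjoint}, given $y \in D$, the maximality forces $B(y, r(y)) \cap B(x_k, r(x_k)) \neq \emptyset$ for some $k$. Using that $z \mapsto \dist(z, X \setminus D)$ is $1$-Lipschitz, $|r(y) - r(x_k)| \leq \dist(y, x_k)/16 < (r(y) + r(x_k))/16$, which yields $r(y) < \tfrac{17}{15} r(x_k)$, and therefore $\dist(y, x_k) < r(y) + r(x_k) < 3 r(x_k) < r_k$. Hence $y \in B_k \subset 2B_k$. For \ref{comparabledistance}, if $x \in 2B_k$ then $\dist(x, x_k) < 2 r_k$, and the same Lipschitz property combined with $\dist(x_k, X\setminus D) = 4 r_k$ delivers $2r_k < \dist(x, X \setminus D) < 6r_k$.

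Properties \ref{radii} and \ref{measures} are quick corollaries. If $x \in B_1 \cap B_2$ then $x \in 2B_1 \cap 2B_2$, and applying \ref{comparabledistance} to both gives $2r_1 \leq 6 r_2$ and $2 r_2 \leq 6 r_1$, i.e.\ $r_1/3 \leq r_2 \leq 3 r_1$, which is stronger than \ref{radii}; Lemma \ref{lem:rmu} then immediately yields \ref{measures}. For the bounded overlap \ref{overlappingbounded}, whenever $B_j \cap B_k \neq \emptyset$ I combine \ref{radii} with the triangle inequality to conclude that $B(x_j, r_j/4) \subset 6 B_k$, while Lemma \ref{lem:rmu} gives $\mu(B(x_j, r_j/4)) \approx \mu(B_k)$. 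Since the balls $\{B(x_j, r_j/4)\}_j$ are pairwise disjoint, the packing inequality $\sum_j \mu(B(x_j, r_j/4)) \leq \mu(6 B_k)$ together with doubling bounds the number of admissible $j$ by a constant $N = N(C_d)$.

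The main technical subtlety I expect is in the maximality/selection step: one needs a family that is simultaneously maximal with respect to disjointness and countable, which is arranged by combining Zorn's lemma with the separability of $X$ provided by the doubling hypothesis. After that, the rest of the verification is a routine bookkeeping exercise with the triangle inequality, the $1$-Lipschitz continuity of $\dist(\cdot, X \setminus D)$, and the doubling condition on $\mu$.
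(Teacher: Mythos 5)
Your proof is correct, but note that the paper itself does not prove this lemma: it is stated with \qed and attributed to \cite{MR3183648}*{Lemma 2.8} (tracing back to Stein and Coifman--Weiss), with the paper only adding the remark that \ref{radii} and \ref{measures} follow from \ref{comparabledistance} and Lemma \ref{lem:rmu}. What you have written out is essentially the standard Whitney-ball construction those references carry out: fix $r(x)=\dist(x,X\setminus D)/16$, take a maximal disjoint subfamily of $\{B(x,r(x))\}$ (you use Zorn plus separability where the sources typically invoke the $5r$-covering lemma directly, a cosmetic difference), dilate by a factor $4$, and verify the Whitney properties via the $1$-Lipschitz continuity of $\dist(\cdot,X\setminus D)$ and the doubling inequality. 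The bookkeeping is sound; in particular your sharper bound $\tfrac{1}{3}\rad B_1\leq \rad B_2\leq 3\rad B_1$ is a clean consequence of applying \ref{comparabledistance} at a common point of $2B_1\cap 2B_2$, and the containment $B(x_j,r_j/4)\subset 6B_k$ together with the packing argument correctly delivers the bounded overlap with $N=N(C_d)$. So the proposal supplies a complete, self-contained proof where the paper relies on citation, and the two are in agreement on every substantive point.
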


Properties \ref{radii} and \ref{measures} are not explicitly stated in \cite{MR3183648}*{Lemma 2.8}, but they are direct consequences of \ref{comparabledistance} and Lemma \ref{lem:rmu}. The next lemma pertains to balls whose radius is comparable to their distance from the boundary. This, of course, includes but is not limited to actual Whitney balls.
\begin{lemma}\label{lem:toolsforlemma2} Let $D\subset X$ be open and proper, and $B \subset D$ a ball such that $\rad(B) \approx\dist(B, X \setminus D).$ Then
\begin{enumerate}[label=\normalfont{(\roman*)}]
\item \label{ztempone} if $B'\in\mathcal{W}(D)$ and $B' \cap B \neq \emptyset,$ then $\rad(B) \approx \rad(B')$; 
\item \label{ztemptwo} there are at most $N =N(C_d)<\infty$ Whitney balls on $D$ intersecting $B$;
\item \label{ztempfour} Assume further that $X$ is a geodesic space. If $B' \in \mathcal{W}(D)$ and $B'$ contains the center of $B$, then there exists a ball $B'' \subset B \cap B'$ such that $\rad(B'') \approx \rad(B)$ and $\mu(B'')\approx\mu(B)$.
\end{enumerate}
\end{lemma}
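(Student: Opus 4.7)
The plan is to exploit the hypothesis $\rad(B)\approx\dist(B,X\setminus D)$ to transfer information between $B$ and Whitney balls that touch it, and then invoke Lemmas \ref{lem:ballintersection} and \ref{lem:rmu} for (iii).

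For \ref{ztempone}, I would fix any point $y\in B'\cap B$ and use property \ref{comparabledistance} of the Whitney decomposition to write $2\rad(B')\leq\dist(y,X\setminus D)\leq 6\rad(B')$. The triangle inequality gives
\[
\dist(B,X\setminus D)\leq \dist(y,X\setminus D)\leq \dist(B,X\setminus D)+2\rad(B),
\]
so combining the two chains with the standing assumption $\rad(B)\approx\dist(B,X\setminus D)$ yields $\rad(B')\approx\rad(B)$.

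For \ref{ztemptwo}, let $\{B_k=B(x_k,r_k)\}$ enumerate the Whitney balls that intersect $B.$ By \ref{ztempone} each $r_k\approx\rad(B)$, and since $B_k\cap B\neq\emptyset$ the center $x_k$ lies within distance $r_k+2\rad(B)\lesssim\rad(B)$ of the center of $B$; thus there is a uniform $\lambda\geq 1$ (depending only on $C_d$ and the constants implicit in the hypothesis on $B$) with $B(x_k,r_k/4)\subset \lambda B$ for every $k$. Property \ref{coveringdisjoint} makes the balls $\{B(x_k,r_k/4)\}$ pairwise disjoint, and Lemma \ref{lem:rmu} together with doubling gives $\mu(B(x_k,r_k/4))\approx\mu(B)$. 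The packing estimate
\[
\#\{B_k\}\cdot \mu(B)\lesssim \sum_k \mu(B(x_k,r_k/4))\leq \mu(\lambda B)\lesssim \mu(B)
\]
yields the desired uniform bound on the number of Whitney balls meeting $B$.

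For \ref{ztempfour}, part \ref{ztempone} already gives $\rad(B')\approx\rad(B)$, in particular $\rad(B)\lesssim\rad(B')$, and by hypothesis $B'$ contains the center of $B.$ Lemma \ref{lem:ballintersection}, which is where the geodesic assumption on $X$ enters, then produces a ball $B''\subset B\cap B'$ with $\rad(B'')\approx\rad(B)$. Finally, since the center of $B''$ lies in $B$, its distance from the center of $B$ is at most $\rad(B)$, so Lemma \ref{lem:rmu} applied to $B$ and $B''$ yields $\mu(B'')\approx\mu(B).$

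The substantive step is \ref{ztempone}: everything else is a packing argument plus direct citations of \ref{lem:ballintersection} and \ref{lem:rmu}. The main thing to be careful about is that in \ref{ztemptwo} the comparability constants stay dependent only on $C_d$ and on the implicit constants in the hypothesis $\rad(B)\approx\dist(B,X\setminus D)$; this is guaranteed because \ref{ztempone} already provides such quantitative bounds.
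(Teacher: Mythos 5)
Your proposal is correct and follows essentially the same strategy as the paper: part \ref{ztempone} via the Whitney distance estimates at a point $y\in B\cap B'$ together with $\rad(B)\approx\dist(B,X\setminus D)$, part \ref{ztemptwo} via a packing argument using the disjointness of the balls $\tfrac14 B_k$ and the measure comparabilities from \ref{ztempone} and Lemma \ref{lem:rmu}, and part \ref{ztempfour} by citing Lemma \ref{lem:ballintersection} (where the geodesic hypothesis enters) and then Lemma \ref{lem:rmu} for the measures. The only cosmetic difference is in bookkeeping the containment $\bigcup_k \tfrac14 B_k\subset\lambda B$ and the measure comparisons, which you do in one step rather than two.
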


\begin{proof}~
\noindent (i) Let $y\in B \cap B'.$ Since $B'$ is a Whitney ball, we have that
\[
\rad(B') \leq \dist(B',\partial D) \leq \dist(y,X \setminus D) \leq  \diam(B)+ \dist(B,X \setminus D)  \approx \rad(B).
\]
Similarly, we obtain $\rad(B')\gtrsim\rad(B)$. 

\noindent (ii) Let $\mathcal{B}= \lbrace R \mathbin{:} R\in \mathcal{W}(D), \, R \cap B \neq \emptyset \rbrace$ and let $N$ denote the cardinal of $\mathcal{B}.$ Let us write $\mathcal{B}= \lbrace B_i \rbrace_{i=1}^N.$ By claim \ref{ztempone} we have that $\rad(B_i) \approx \rad(B)$ for each $i$, and by Lemma \ref{lem:rmu} $\mu(B_i) \approx \mu (B)$ for every $i$. Thus there exists a constant $\lambda \geq 1$ such that 
$$
\bigcup_{i=1}^N B_i  \subseteq \lambda B.
$$
The $B_i$ being Whitney balls, the collection $\lbrace \frac{1}{4} B_i \rbrace_i$ is pairwise disjoint. Also, observe that $\mu\left( \frac{1}{4} B_i \right) \approx  \mu (B_i) \approx \mu (B)$ for each $i$. Since we obviously have the inclusion $\bigcup_{i=1}^N \frac{1}{4} B_i \subseteq \lambda B,$ we may write
$$
\mu (B) \gtrsim \mu(\lambda B) \geq \mu \left( \bigcup_{i=1}^N \tfrac{1}{4} B_i  \right) = \sum_{i=1}^N \mu(\tfrac{1}{4} B_i) \gtrsim \sum_{i=1}^N \mu(B) = N \mu(B).
$$
This proves that $N$ is bounded above by a constant only depending on the doubling constant. 

\noindent (iii) We know from \ref{ztempone} above that $r(B) \approx r(B').$ Lemma \ref{lem:ballintersection} provides a ball $B'' \subset B \cap B'$ such that $r(B'') \approx r(B) \approx r(B').$ The statement for measures then follows from Lemma \ref{lem:rmu}.
\end{proof}

By a \emph{domain} $D$ of $X$ we understand a nonempty proper open subset of $X$ with the property that every two points in $D$ can be joined by a rectifiable curve entirely contained in $D.$

\begin{dfn} Let $D\subset X$ be a domain, $k \in \{0, 1, 2,\ldots\}$, and $B_j\in \mathcal{W}(D)$ for $j=0,\ldots,k.$ We say that 
\begin{equation*}
\mathcal{C}(B_0, B_k) = \left(B_0, \ldots, B_k\right)
\end{equation*}
is a (Whitney) \emph{chain} joining $B_0$ to $B_k$, if $B_j\cap B_{j-1}\neq\emptyset$ for every $j\in \lbrace 1, \ldots, k \rbrace.$ In this case, we say that $k$ is the \emph{length} of the chain $\mathcal{C}(B_0, B_k).$ The length of the shortest chain in $D$ from $B_0$ to $B_k$ is denoted by $\widetilde{k_D}(B_0, B_k)$. Because there is no possibility of confusion, we drop the subscript $D$ from now on. 
\end{dfn}

We will be measuring distances in $D$ in terms of the quasihyperbolic metric, which was introduced by Gehring in the 1970s to study quasiconformal mappings in $\mathbb{R}^n$; see \cite{MR437753} and \cite{MR581801}. 
\begin{dfn}Let $X$ be a geodesic space. For a domain $D\subset X$ and two points $x_1, x_2\in D$, the \emph{quasihyperbolic distance} between them is 
\begin{equation*}
k_D(x_1, x_2) = \inf_\gamma \int_\gamma \frac{\dd s}{\dist(y, \partial D)},
\end{equation*}
where the infimum is taken over all rectifiable curves $\gamma\subset D$ with endpoints $x_1$ and $x_2$. The quantity $k_D$ satisfies the axioms of a metric on $D \times D.$ A rectifiable curve $\gamma: \left[0,1\right]\to D$ is called a \emph{quasihyperbolic geodesic} if, for each pair of points $y_1, y_2\in\gamma$, it holds that 
\begin{equation*}
k_D(y_1, y_2) = \int_{\gamma|_{\left[y_1, y_2\right]}} \frac{\dd s}{\dist(y, \partial D)}.
\end{equation*}
\end{dfn}
If $E_1, E_2$ are subsets of $D$, we define $k_D(E_1, E_2) = \inf_{\substack{x_1\in E_1,\,x_2\in E_2}}k_D(x_1, x_2)$. As there is no risk of ambiguity, we will leave out the subscript $D$ in the following.

It is easy to see that the quasihyperbolic diameter of any Whitney-like ball is bounded, which is the content of the following lemma.
\begin{lemma}\label{qhdiam}
Assume further that $X$ is a geodesic space and let $D\subset X$ be a domain. If $B \subset D$ is a ball such that $d(B, \partial D) \approx \rad(B),$ then $k(x,y) \leq C$ for any two points $x,y\in B.$
\end{lemma}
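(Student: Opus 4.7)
The plan is to construct an explicit curve in $D$ joining $x$ to $y$ on which the integrand $1/\dist(\cdot, \partial D)$ is uniformly bounded, so that the defining infimum of $k$ can be controlled crudely.

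First, let $z$ be the center of $B$ and pick any two points $x,y\in B$. Since $X$ is a geodesic space, there exist geodesics $\gamma_1$ from $x$ to $z$ and $\gamma_2$ from $z$ to $y$. The key observation is that each of these geodesics is entirely contained in $B$: if $p$ lies on $\gamma_1$, then by the equality case of the triangle inequality on geodesic subarcs recalled before Lemma \ref{lem:ballintersection},
\[
\dist(p,z)=\dist(x,z)-\dist(x,p)\leq \dist(x,z)<\rad(B),
\]
and similarly for $\gamma_2$. The concatenation $\gamma=\gamma_1\ast\gamma_2$ is therefore a rectifiable curve in $B\subset D$ joining $x$ to $y$, with length $\ell(\gamma)=\dist(x,z)+\dist(z,y)<2\rad(B)$.

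Next, I would bound the integrand along $\gamma$. For every point $p\in B$ we have $\dist(p,\partial D)\geq \dist(B,\partial D)$, and the hypothesis gives $\dist(B,\partial D)\approx\rad(B)$, so there is a constant $c>0$ (depending only on the implicit constants in $\dist(B,\partial D)\approx\rad(B)$) such that $\dist(p,\partial D)\geq c\,\rad(B)$ for every $p\in\gamma$. Inserting this into the definition of the quasihyperbolic distance yields
\[
k(x,y)\leq \int_\gamma \frac{\dd s}{\dist(\cdot\,,\partial D)}\leq \frac{\ell(\gamma)}{c\,\rad(B)}\leq \frac{2\rad(B)}{c\,\rad(B)}=\frac{2}{c},
\]
which is a constant independent of $x$ and $y$. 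Taking $C=2/c$ completes the proof.

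There is no real obstacle here; the only point requiring a little care is that the geodesics $\gamma_1$ and $\gamma_2$ stay inside $B$ (which does not follow from ``convexity'' of $B$ in a general geodesic space, and must instead be read off from the additivity of length along geodesic subarcs), and that $\dist(p,\partial D)$ is indeed bounded below on $B$, which is immediate from the definition of $\dist(B,\partial D)$ as an infimum.
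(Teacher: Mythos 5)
Your proof is correct and takes essentially the same route as the paper: follow geodesics from $x$ and $y$ to the center $z$, use $\dist(\cdot,\partial D)\gtrsim\rad(B)$ on $B$ together with $\ell(\gamma_i)<\rad(B)$, and conclude. The only cosmetic difference is that the paper bounds $k(z,x)$ and $k(z,y)$ separately and invokes the triangle inequality for $k$, while you concatenate the two geodesics; your extra remark explicitly justifying that a geodesic from a point of $B$ to the center stays inside $B$ (via additivity of length along geodesic subarcs) makes explicit a step the paper leaves implicit.
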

\begin{proof}
Let $z$ denote the center of $B,$ and let $\gamma \subset B$ be a rectifiable curve connecting $z$ and $x$ such that $\ell(\gamma|_{[z, x]}) = d(z,x)$. Then 
\begin{equation*}
k(z, x) \leq \int_{\gamma|_{[z, x]}} \frac{\dd s}{\dist(y, \partial D)} \lesssim \int_{\gamma|_{[z, x]}} \frac{\dd s}{\rad (B)} = \frac{ \ell(\gamma|_{[z, x]})}{\rad(B)} \leq C.
\end{equation*}
Similarly we obtain $k(z, y) \leq C$, and the triangle inequality implies $k(x, y)\leq C.$
\end{proof}

The next lemma establishes an equivalence between shortest Whitney chains and quasihyperbolic distance. It is essentially contained in the proof of Lemma 9 in \cite{MR1373594}. For a detailed proof of the corresponding lemma in $\mathbb{R}^n$, see Proposition 6.1 in \cite{MR978019}. Notice that if the space $X$ is geodesic and $D\subset X$ is a proper subset, the distance functions $\dist(\cdot, \partial D)$ and $\dist(\cdot, X\setminus D)$ coincide over $D.$ We are then allowed to use Lemmas \ref{lem:whiba} and \ref{lem:toolsforlemma2} with the distance $\dist(\cdot, \partial D)$ instead of $\dist(\cdot, X\setminus D).$
\begin{lemma}\label{lem:whitneybolic} Assume further that $X$ is a geodesic space. Let $D\subset X$ be a domain and $B_i=B(x_i, r_i)\in\mathcal{W}(D),$ $i=1,2$. Then $\widetilde{k}(B_1, B_2)\approx k(x_1, x_2)$. 
\end{lemma}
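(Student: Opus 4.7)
The plan is to prove the two inequalities $k(x_1,x_2)\lesssim \widetilde{k}(B_1,B_2)$ and $\widetilde{k}(B_1,B_2)\lesssim k(x_1,x_2)$ separately.

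For the first inequality, I would fix a shortest Whitney chain $(W_0,\ldots,W_n)$ with $W_0=B_1$, $W_n=B_2$, and $n=\widetilde{k}(B_1,B_2)$, and let $y_j$ denote the center of $W_j$. Picking any $z_j\in W_j\cap W_{j+1}$, property \ref{comparabledistance} of Lemma \ref{lem:whiba} gives $d(z_j,\partial D)\approx r(W_j)\approx r(W_{j+1})$, so Lemma \ref{qhdiam} applied inside $W_j$ and $W_{j+1}$ yields $k(y_j,z_j),\,k(z_j,y_{j+1})\lesssim 1$. Summing by the triangle inequality, $k(x_1,x_2)\leq\sum_{j=0}^{n-1}k(y_j,y_{j+1})\lesssim n$.

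For the reverse inequality, given $\varepsilon>0$, I would fix a rectifiable curve $\gamma$ in $D$ joining $x_1$ to $x_2$ whose quasihyperbolic length $L$ satisfies $L\leq k(x_1,x_2)+\varepsilon$ (finite since $\gamma$ is compact and contained in the open set $D$, so $d(\cdot,\partial D)$ is bounded below along $\gamma$), and reparametrize it by quasihyperbolic arc-length as $\gamma\colon[0,L]\to D$. Partition $[0,L]$ into $N=\lceil L/c_0\rceil$ subintervals of length at most a small absolute constant $c_0$, with endpoints $s_0<s_1<\cdots<s_N$, and set $p_j=\gamma(s_j)$, $d_j=d(p_j,\partial D)$. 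For $s,s'\in[s_j,s_{j+1}]$ the parametrization gives
\[
d(\gamma(s),\gamma(s'))\leq \int_{s}^{s'} d(\gamma(u),\partial D)\,du \leq c_0 \sup_{[s_j,s_{j+1}]} d(\gamma(\cdot),\partial D),
\]
which, combined with the $1$-Lipschitz bound $|d(\gamma(s),\partial D)-d(\gamma(s'),\partial D)|\leq d(\gamma(s),\gamma(s'))$, forces the sup-to-inf ratio of $d(\gamma(\cdot),\partial D)$ on $[s_j,s_{j+1}]$ to be at most $1/(1-c_0)$ and the whole subarc to lie in $\widehat{B}_j:=B(p_j,\lambda d_j)$ for some $\lambda=\lambda(c_0)\to 0$ as $c_0\to 0$. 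For $c_0$ small enough, $\rad(\widehat{B}_j)\approx d(\widehat{B}_j,\partial D)\approx d_j$.

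I would then pick $W_j\in\mathcal{W}(D)$ containing $p_j$, with $W_0=B_1$ and $W_N=B_2$; by property \ref{comparabledistance} of Lemma \ref{lem:whiba}, $r(W_j)\approx d_j$, and both $W_j,W_{j+1}$ intersect $\widehat{B}_j$. Lemma \ref{lem:toolsforlemma2}\,\ref{ztemptwo} bounds by some $N_0=N_0(C_d)$ the number of Whitney balls meeting $\widehat{B}_j$; the union of these balls is open and covers $\widehat{B}_j$, which is connected since $X$ is geodesic, so their intersection graph is connected, and $W_j$ can be joined to $W_{j+1}$ within it by a subchain of length at most $N_0$. Concatenating gives $\widetilde{k}(B_1,B_2)\leq N_0 N\lesssim L\lesssim k(x_1,x_2)+\varepsilon$, and $\varepsilon\to 0$ completes the argument. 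The main obstacle I anticipate is precisely this packing-plus-connectedness step—controlling the number of Whitney balls near each $p_j$ via Lemma \ref{lem:toolsforlemma2} and then chaining them—for which the geodesic hypothesis on $X$ is essential to guarantee that $\widehat{B}_j$ is connected; the Euclidean proof in \cite{MR978019} sidesteps both issues using the dyadic cube structure.
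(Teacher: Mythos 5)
Your argument for $k(x_1,x_2)\lesssim\widetilde{k}(B_1,B_2)$ is the same as the paper's. For the reverse inequality you take a genuinely different route. The paper fixes a quasihyperbolic geodesic $\gamma$, covers it by Whitney-like balls $B_{z_i}$ (centered on $\gamma$, of radius equal to the smallest Whitney radius at $z_i$) with overlap at most two, proves the uniform lower bound \eqref{tempkayra} for the quasihyperbolic length of the subarc of $\gamma$ in each $B_{z_i}$, and then compares the number $m$ of covering balls to $\widetilde{k}(B_1,B_2)$ from above using the packing estimate of Lemma \ref{lem:toolsforlemma2} \ref{ztemptwo} and to $k(x_1,x_2)$ from below via \eqref{tempkayra}. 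You instead take a near-minimizing curve, reparametrize by quasihyperbolic arc-length, cut it into $N=\lceil L/c_0\rceil$ pieces of small quasihyperbolic length, trap each piece in a Whitney-like ball $\widehat{B}_j$, and bridge consecutive Whitney balls $W_j,W_{j+1}$ by a subchain drawn from the at most $N_0$ Whitney balls meeting $\widehat{B}_j$; the subchain exists because $\widehat{B}_j$ is connected (the geodesic hypothesis) and covered by open balls, so the intersection graph of that finite family is connected. Both proofs turn on the same packing bound; yours has the mild advantage of not invoking the existence of quasihyperbolic geodesics, while the paper trades your explicit graph-connectedness step for a direct count.

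One small gap to close: the displayed bound $N_0 N\lesssim L$ is false when $L$ is small, since $N=\lceil L/c_0\rceil\geq 1$ while $L$ may be arbitrarily close to $0$. Handle $B_1=B_2$ separately (both quantities vanish), and when $B_1\neq B_2$ note that $B(x_1,r_1/4)$ and $B(x_2,r_2/4)$ are disjoint by Lemma \ref{lem:whiba} \ref{coveringdisjoint}, which together with property \ref{comparabledistance} gives $d(x_1,x_2)\gtrsim r_1\approx d(x_1,\partial D)$ and hence $k(x_1,x_2)\gtrsim 1$; with that lower bound in hand, $N\lesssim L\lesssim k(x_1,x_2)+\varepsilon$ does hold and your conclusion follows.
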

\begin{proof} Let $M = \widetilde{k}(B_1, B_2)$ be the length of the shortest Whitney chain joining $B_1$ to $B_2$. In the case $x_1=x_2,$ both quantities amount to zero and there is nothing to prove. Suppose now $x_1$ and $x_2$ are distinct points. First, we prove $\widetilde{k}(B_1, B_2)\lesssim k(x_1, x_2)$. Denote by $\gamma$ the quasihyperbolic geodesic joining $x_1$ and $x_2$, and take $z$ to be an arbitrary point on $\gamma$. Of all the Whitney balls containing $z$, we choose the one with the smallest radius, say, $B=B(x,r)$. Consider the ball $B_z$ centered at $z$ and with radius $r$. It is clear that $B_z\subset 2B,$ and thus $B_z$ is contained in $D$ with $\dist(B_z,\partial D) \geq \dist(2B, \partial D) \geq r$ by virtue of Lemma \ref{lem:whiba} \ref{comparabledistance}. Also, by the properties of the Whitney decomposition (Lemma \ref{lem:whiba} \ref{comparabledistance}), we have
$$
\dist(B_z, \partial D)\leq \dist (z,\partial D) \leq \dist(B,\partial D) + \diam(B) \leq 8 r,
$$ 
and we conclude that $\dist(B_z,\partial D) \approx \rad(B_z) = r.$

Let $\gamma_z$ be a subarc of $\gamma  \cap B_z$ passing through $z$ and of maximal length. We claim that $\ell(\gamma_z)\geq C_1r$ at all times. Whenever $\gamma$ is not entirely contained in $B_z$, by the continuity of $\gamma,$ there exists a point $q\in \gamma_z$ such that $d(q,z)>r/2.$ Then we have $\ell(\gamma_z)\geq d(q,z) = r/2.$ In the case $\gamma \subset B_z$, by the properties of the Whitney decomposition there exists a constant $0<c<1$ such that $\ell(\gamma_z) = \ell(\gamma) \geq d(x_1,x_2) \geq cr_1$. Furthermore, Lemma \ref{lem:toolsforlemma2} \ref{ztempone} gives $r\approx r_1$ and consequently $\ell(\gamma_z) \geq C_1r$. Recalling that $\gamma_z\subset B_z$ and $\dist (z,\partial D) \leq 8r$, in all cases it holds that
\begin{equation}\label{tempkayra}
\int_{\gamma_z}\frac{\dd l}{\dist(y,\partial D)} \geq \frac{\ell(\gamma_z)}{r+\dist( z,\partial D)}\geq \frac{C_1r}{9r} \geq C_2.
\end{equation}
Next, we cover the geodesic $\gamma$ by balls $\lbrace B_{z_i} \rbrace_i$, with the points $\{z_i\}_i\subset \gamma$ chosen so that every point is contained in at most two balls $B_{z_i}$. Among these collections we choose the one with the smallest cardinality, say $m=\card\lbrace B_{z_i}\rbrace$. For any $z\in \gamma$, Lemma \ref{lem:toolsforlemma2} \ref{ztemptwo} shows that there are at most $C$ Whitney balls intersecting $B_z$. Now let $M_1$ be the minimal number of Whitney balls needed to cover $\bigcup_i{B_{z_i}}$, and denote this collection by $\mathcal{F}$. Clearly $M_1\geq M$, because $M$ was the length of the shortest chain joining $B_1$ and $B_2.$ Also, we have that $\card\mathcal{F} = M_1$ and, by minimality, for every $B\in \mathcal{F}$ there is at least one $i$ such that $B \cap B_{z_i} \neq \emptyset$. Therefore, we have that $\mathcal{F} \subset \bigcup_i \lbrace B \in \mathcal{W}(D) \mathbin{:} B \cap B_{z_i} \neq \emptyset \rbrace$ and
$$
M_1 = \card\mathcal{F} \leq \card \left( \bigcup_{i=1}^m \lbrace B \in \mathcal{W}_D \mathbin{:} B \cap B_{z_i} \neq \emptyset \rbrace \right) \leq \sum_{i=1}^m \card \lbrace  B \in \mathcal{W}_D \mathbin{:} B \cap B_{z_i} \neq \emptyset \rbrace \leq Cm.
$$
We obtain $Cm \geq M_1 \geq M.$ Now, denoting $\gamma_i = \gamma\cap B_{z_i}$ and applying \eqref{tempkayra} on each of these subarcs, we obtain the estimate
$$
 k(x_1,x_2) = \int_\gamma \frac{\dd l}{\dist(y,\partial D)} \geq \frac{1}{2} \sum_{i=1}^m \int_{\gamma_i} \frac{\dd l}{\dist(y,\partial D)} \geq \frac{mC_2}{2}\geq \frac{C_2}{2C} M  = C_3\widetilde{k}(B_1, B_2). 
$$

As for the inequality in the other direction, take the the shortest chain $\mathcal{C}=\left( B^1, B^2, \ldots, B^M \right)$ connecting $B_1 =B^1= B(x^1, r^1)$ and $B_2 =B^M= B(x^M, r^M).$ For every $j \in \lbrace 1,\ldots, M-1 \rbrace,$ take a point $p_j$ in $B^j \cap B^{j+1}.$ We have $k(x^j,p_j)\leq C$ and $k(x^{j+1},p_j) \leq C$ owing to Lemma \ref{qhdiam}. Using the triangle inequality repeatedly, we obtain
\[
k(x_1, x_2) = k(x^1, x^M) \leq \sum_{j=1}^{M-1} \left( k(x^j,p_j) + k(p_j,x^{j+1}) \right) \leq 2C(M-1) \lesssim M= \widetilde{k}(B_1, B_2),
\]
whereby the statement is proven.
\end{proof}

\section{Estimates for weights on Whitney chains}\label{section:weights}

In a metric measure space $X$ we call a domain $D\subset X$ an \emph{extension domain} for the Muckenhoupt class $A_p$, if whenever $w\in A_p(D)$ there exists a $W\in A_p(X)$ such that $W=w$ a.~e. on $D$. Holden \cite{MR1162041} gives certain sufficient conditions for extension domains in $\mathbb{R}^n$. Holden's strategy of proof is to verify Wolff's condition \eqref{intro:muck} by propagating estimates on cubes along Whitney chains. In this final section we adapt \cite{MR1162041}*{Lemma 2} into the metric setting, resulting in Lemma \ref{lem:hold2} below. In Holden's Euclidean argument, \cite{MR1162041}*{Lemma 2} is used to estimate integrals over each cube in a dyadic decomposition of $Q\cap E$ in terms of integrals over cubes arising from Holden's assumptions that enjoy additional good properties.

To put the extension problem in context, it is instructive to outline the situation regarding the space of \emph{functions of bounded mean oscillation} (BMO). These are intimately related to Muckenhoupt weights: whenever a weight $w$ belongs to $A_p$, then $\log w$ is of bounded mean oscillation. Conversely, whenever $f\in \mathrm{BMO}$, then $\exp(\delta f)\in A_p$ for small enough $\delta$. Peter W. Jones \cite{MR554817} has shown that extension domains for BMO functions in the Euclidean space $\mathbb{R}^n$ are precisely \emph{uniform domains}, that can be characterized in terms of the quasihyperbolic metric. Vodop'yanov and Greshnov \cite{MR1373594} extended Jones' characterization to metric spaces supporting a doubling measure. Recently, Butaev and Dafni \cite{buda} proved the analogue of Jones' characterization for functions of \emph{vanishing mean oscillation} in $\mathbb{R}^n$. 

For Muckenhoupt weights, the question remains open. Some examples and counterexamples concerning necessary or sufficient conditions for extension domains for $A_p$ are discussed by Holden \cite{MR1162041} and Koskela in his corresponding review \cite{koskela}.

For the purposes of this section we need to introduce classical $A_p$ weights defined on a subset. Compare this to Definition \ref{def:Aptilde}. By $L^1_{\loc}(D)$ we denote the class of functions that are integrable on every compact subset of $D.$
\begin{dfn}\label{def:Ap} Let $D\subset X$ be a nonempty open subset in a metric space $X$, and $1<p<\infty$. An a.~e. positive function $w\in L^1_{\loc}(D)$ is called a \emph{Muckenhoupt $A_p$ weight} in $D$, denoted $w\in A_p(D)$, if
\begin{equation}\label{Ap}
\normp{w}{p} = 
\sup_{\substack{B\subset D}}\left(\frac{1}{\mu(B)}\int_{B}w \dd\mu \right)\left(\frac{1}{\mu(B)}\int_{B}w ^{-\frac{1}{p-1}}\dd \mu \right)^{p-1}<\infty.
\end{equation}
The supremum is taken over all balls $B\subset D$. For $p=1$, a nonnegative function $w\in L^1_{\loc}(D)$ belongs to $A_1(D)$ if there exists a constant $C>0$ such that for all balls $B\subset D$
\begin{equation}\label{A1}
\frac{1}{\mu(B)}\int_{B}w \dd\mu  \leq C\essinf_{ B} w.
\end{equation}
We denote by $\normp{w}{1}$ the infimum of the $C>0$ for which the inequality \eqref{A1} holds. 
\end{dfn}

Provided that the underlying measure $\mu$ satisfies a doubling condition and $w$ is an $A_p$ weight, the weighted measure $w\dd\mu$ satisfies the doubling condition for balls $B$ such that $2B \subset D.$ This property follows from statement \ref{five} of the next lemma, that collects some estimates for weights on balls and chains. Throughout the rest of the section, we will assume that $(X, d, \mu)$ is a complete metric measure space such that $\mu$ satisfies \eqref{Doubling}.

\begin{lemma}\label{lem:hold1} Let $D\subset X$ be an open proper subset, and $w\in A_p(D)$ with $1\leq p<\infty$.
\begin{enumerate}[label=\normalfont{(\roman*)}]
\item \label{one} If the ball $B\subset D$, then
\begin{equation*}
\frac{1}{\mu(B)}\int_B w \dd \mu \leq \normp{w}{p} \exp\left(\frac{1}{\mu(B)}\int_B\log w \dd\mu \right).
\end{equation*}
\item \label{five} If $B$ is a ball in $D$ and $E \subset B$ is a measurable subset with $\mu(E)>0,$ then
\begin{equation*}
\int_{B}w \dd\mu  \leq \normp{w}{p}\left(\frac{\mu(B)}{\mu(E)}\right)^{p} \int_{E}w \dd\mu . 
\end{equation*}
\item\label{four} \emph{(the $A_\infty$ condition)} There exist constants $0< C_w,\,\delta(w)<\infty$, depending only on the doubling constant $C_d(\mu)$ and the weight $w$, such that for all balls $B\subset D$ and all measurable subsets $E \subset B$ we have
\begin{equation*}
\frac{w(E)}{w(B)}\leq C_w\left(\frac{\mu(E)}{\mu(B)}\right)^{\delta(w)}. 
\end{equation*}
\item \label{three} Assume further that $D$ is a domain. If $B_1$, $B_2\in \mathcal{W}(D)$, then 
\begin{equation*}%
\frac{1}{\mu(B_1)}\int_{B_1}w \dd\mu \leq \exp\left( C \widetilde{k}(B_1, B_2)\right)\frac{1}{\mu(B_2)}\int_{B_2}w \dd\mu,
\end{equation*}
where $C$ is a constant only depending on $C_d,$ $p,$ and $\normp{w}{p}.$
\end{enumerate}
\end{lemma}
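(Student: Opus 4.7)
Claims (i) and (ii) are direct consequences of the $A_p$ definition combined with Jensen's or H\"older's inequality. For (i) with $p>1$, Jensen applied to the convex function $t\mapsto e^{t}$ gives
\[
\Bigl(\tfrac{1}{\mu(B)}\!\int_B w^{-1/(p-1)}\dd\mu\Bigr)^{p-1} \geq \exp\!\Bigl(-\tfrac{1}{\mu(B)}\!\int_B \log w\dd\mu\Bigr),
\]
and combining with \eqref{Ap} on the ball $B\subset D$ and rearranging gives the claim. For $p=1$, the concavity of $\log$ yields $\essinf_B w \leq \exp(\tfrac{1}{\mu(B)}\!\int_B\log w\dd\mu)$, which together with \eqref{A1} closes the argument. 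For (ii) with $p>1$, H\"older's inequality applied to $\mu(E)=\int_E w^{1/p}\cdot w^{-1/p}\dd\mu$ with exponents $p$ and $p'$ yields $\mu(E)^p \leq (\int_E w)(\int_E w^{-1/(p-1)})^{p-1}$; enlarging the second factor to an integral over $B$ and using \eqref{Ap} gives the estimate after rearrangement. For $p=1$ the inequality $\essinf_B w \leq \mu(E)^{-1}\!\int_E w\dd\mu$ applied inside \eqref{A1} is enough.

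I expect (iii) to be the main obstacle, since the reverse H\"older inequality in Proposition \ref{thm:revho} is stated only for $A_p(X)$. My approach is to first derive a \emph{weak} $A_\infty$-bound by replacing $E$ with $B\setminus E$ in (ii):
\[
\frac{w(E)}{w(B)} \leq 1 - \normp{w}{p}^{-1}\Bigl(1 - \frac{\mu(E)}{\mu(B)}\Bigr)^{p}.
\]
In particular, there exist $\alpha,\eta\in(0,1)$, depending only on $p$ and $\normp{w}{p}$, with $\mu(E) \leq \alpha\mu(B) \Rightarrow w(E)\leq \eta w(B)$. The quantitative power-law decay follows by iterating the weak bound through a Calder\'on--Zygmund/Vitali stopping-time construction: given $E\subset B$ with $\mu(E)\leq \alpha^{k}\mu(B)$, extract by stopping times a family of maximal sub-balls of $B$ contained in $D$ on which the density of $E$ first reaches $\alpha$, apply the weak estimate on each sub-ball, and repeat $k$ times to conclude $w(E)\leq \eta^{k}w(B)$. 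Interpolating in $k$ and setting $\delta(w) = -\log\eta/\log(1/\alpha)$ gives the stated estimate. The delicate point is that the decomposition must produce sub-balls remaining inside $D$ and must exploit the doubling of $\mu$ in lieu of the Euclidean cube structure used in the classical proof.

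For (iv) it suffices to handle adjacent Whitney balls, then iterate along the chain. Given $B_i = B(x_i,r_i)\in\mathcal{W}(D)$ with $B_1\cap B_2\neq\emptyset$, pick $y\in B_1\cap B_2$ and set $B'=B(y,\min(r_1,r_2)/2)$; the triangle inequality immediately yields $B'\subset 2B_1\cap 2B_2$, while Lemma \ref{lem:whiba}(iv) and doubling give $\mu(B')\approx \mu(B_1)\approx \mu(B_2)$. Since $2B_i\subset D$ by Lemma \ref{lem:whiba}(i), applying (ii) twice -- first on $2B_1$ with $E=B'$, then on $2B_2$ with $E=B_2$ -- produces
\[
\int_{B_1}w\dd\mu \leq \int_{2B_1}w\dd\mu \leq C\int_{B'}w\dd\mu \leq C\int_{2B_2}w\dd\mu \leq C'\int_{B_2}w\dd\mu,
\]
with $C, C'$ depending only on $p$, $\normp{w}{p}$, and $C_d$. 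Dividing by the comparable measures yields a constant $C_0=C_0(C_d,p,\normp{w}{p})$ such that $\frac{1}{\mu(B_1)}\!\int_{B_1}w\dd\mu \leq C_0\,\frac{1}{\mu(B_2)}\!\int_{B_2}w\dd\mu$ for any pair of adjacent Whitney balls; iterating $\widetilde{k}(B_1,B_2)$ times along the shortest chain produces the factor $C_0^{\widetilde{k}(B_1,B_2)} = \exp(\widetilde{k}(B_1,B_2)\log C_0)$, as claimed.
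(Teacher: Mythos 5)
Parts \ref{one} and \ref{five} are correct and follow essentially the same Jensen and H\"older arguments as the paper; in \ref{one} the paper handles $p=1$ by the inclusion $A_1(D)\subset A_2(D)$ rather than by a separate concavity argument, but the content is identical.

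For part \ref{four}, your derivation of the weak $A_\infty$ estimate
\[
\frac{w(E)}{w(B)} \;\leq\; 1 - \normp{w}{p}^{-1}\Bigl(1 - \frac{\mu(E)}{\mu(B)}\Bigr)^{p}
\]
from \ref{five} (applied to $B\setminus E$) is correct and gives the qualitative implication $\mu(E)\leq\alpha\mu(B)\Rightarrow w(E)\leq\eta w(B)$. However, promoting this to the quantitative power-law bound is exactly the nontrivial part, and you leave it at the level of a one-line sketch. The stopping-time iteration you gesture at is standard over dyadic cubes in $\R^n$, but in a metric measure space it requires either a Christ-type dyadic decomposition or a Vitali/Calder\'on--Zygmund stopping-time construction adapted to balls, together with the additional constraint that every stopping ball stays inside $D$ (which is not automatic once you inflate balls). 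As written this is a genuine gap. The paper takes a different and shorter route: it invokes a Gehring-type lemma for $A_p(D)$ weights (valid because Definition \ref{def:Ap} only tests balls contained in $D$), deduces a reverse H\"older inequality over balls $B\subset D$, and then obtains \ref{four} by one application of H\"older. If you want a self-contained metric proof, adapting Gehring's lemma is cleaner than building the stopping-time iteration from scratch.

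For part \ref{three}, your argument is correct and in fact cleaner than the paper's. Choosing $B'=B(y,\min(r_1,r_2)/2)$ for a single intersection point $y\in B_1\cap B_2$ gives $B'\subset 2B_1\cap 2B_2$ directly, whereas the paper splits into two cases according to whether $y$ lies near the center of the second ball or not, constructing a different intermediate ball in each case. Your single construction avoids that case analysis; the remaining steps (radius and measure comparability from Lemma \ref{lem:whiba}\ref{radii} and Lemma \ref{lem:rmu}, two applications of \ref{five}, iteration along the shortest chain, and rewriting $C_0^{\widetilde{k}}$ as $\exp(\widetilde{k}\log C_0)$) match the paper.
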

\begin{proof}
To prove \ref{one}, we may assume that $p>1$ because $A_1(D) \subset A_2(D).$ Now, the inequality \ref{one} follows from the $A_p$ condition (Definition \ref{def:Ap}). Indeed, notice that the function $t\mapsto \exp\left(-t(p-1)^{-1}\right)$ is convex, and apply Jensen's inequality:
\begin{align*}
\normp{w}{p} &\geq \left(\frac{1}{\mu(B)}\int_Bw \dd\mu \right)\left(\frac{1}{\mu(B)}\int_B\exp\left(\log w \right)^{-\frac{1}{p-1}}\dd\mu \right)^{p-1}\\
& \geq \left(\frac{1}{\mu(B)}\int_B w \dd\mu \right)\exp\left(\frac{1}{\mu(B)}\int_B\log w \dd\mu \right)^{-\frac{1}{p-1}(p-1)}. 
\end{align*}
When $p>1$, the statement \ref{five} is a consequence of the $A_p$ condition (Definition \ref{def:Ap}):
\begin{align*}
& \frac{1}{\mu(B)}\int_{B}w\dd\mu \leq \normp{w}{p}\left(\frac{1}{\mu(B)}\int_{B_1}w^{-\frac{1}{p-1}}\dd\mu\right)^{-(p-1)}\\
&\hspace*{2em} \leq \normp{w}{p}\left(\frac{\mu(E)}{\mu(B)}\right)^{-(p-1)}\left(\frac{1}{\mu(E)}\int_{E}w^{-\frac{1}{p-1}}\dd\mu\right)^{-(p-1)}\\
&\hspace*{2em} \leq \normp{w}{p}\left(\frac{\mu(B)}{\mu(E)}\right)^{p-1}\frac{1}{\mu(E)}\int_{E}w\dd\mu,
\end{align*} 
where the last estimate follows from H\"older's inequality. Besides, when $p=1,$ $w\in A_1(D)$ implies
$$
\frac{1}{\mu(B)}\int_B w \dd\mu \leq \normp{w}{1} \essinf_{B} w \leq \normp{w}{1} \essinf_{E} w \leq \frac{\normp{w}{1}}{\mu(E)} \int_E w\dd\mu.
$$

For a proof of the $A_\infty$ condition \ref{four} we refer to \cite{MR1011673}, Theorem I.15. There, the weights are globally defined in $X,$ but the proof for weights in $A_p(D)$ is exactly the same. Indeed, provided that $w$ satisfies a reverse Hölder inequality with exponent $1+\delta$ over balls $B \subset D$ (compare Proposition \ref{thm:revho}), using it and the classical Hölder inequality we have
$$
\int_E w \dd \mu \leq \mu(E)^{\frac{\delta}{1+\delta}}\left( \int_B w^{1+\delta} \dd \mu \right)^{\frac{1}{1+\delta}} \leq C \left( \frac{\mu(E)}{\mu(B)} \right)^{\frac{\delta}{1+\delta}} \int_B w \dd \mu
$$
for all balls $B \subset D$ and all measurable subsets $E \subset B.$ The fact that $w$ satisfies a reverse Hölder inequality can be proven using a version of Gehring's lemma for weights in $A_p(D),$ whose proof is similar to that for $A_p(X)$ weights because in Definition \ref{def:Ap} we only consider balls that are entirely contained in $D.$ A proof of Gehring's lemma for $A_p(X)$ weights can be found in \cite{MR2867756}, p.~77.

Finally, let us prove \ref{three}. Let $B_j = B(p_j, r_j)$ and $B_{j+1} = B(p_{j+1}, r_{j+1})$ be two consecutive balls in the chain $\mathcal{C}(B_1, B_2)$. Then $B_j\cap B_{j+1}\neq\emptyset$. To begin with, we show that there is a constant $C$ such that 
\begin{equation}\label{jfjs}
\int_{B_{j+1}} w \dd\mu \leq C\int_{B_j} w\dd\mu. 
\end{equation}
To this effect, let $y\in B_j \cap B_{j+1}$ and suppose  first that $\dist(y,p_{j+1})< \frac{1}{8} r_{j+1}$. For any $z\in X,$ we have
$$
\dist(z,p_j)  \leq  \dist(z, p_{j+1}) + \dist(p_j,y) + \dist(y, p_{j+1}) < \dist(z, p_{j+1}) + r_j+ \tfrac{1}{8} r_{j+1}.
$$
Letting $z\in \frac{1}{8}B_{j+1}=B\left(p_{j+1},\frac{1}{8}r_{j+1}\right)$ in the above and applying \ref{radii} of Lemma \ref{lem:whiba}, we have
\begin{align*}
\dist(z, p_j) <  \tfrac{1}{8} r_{j+1} + r_j+ \tfrac{1}{8} r_{j+1}  \leq \tfrac{1}{4}\cdot 4r_j+r_j = 2r_j,
\end{align*}
which shows that $\frac{1}{8}B_{j+1} \subset 2B_j \subset D$ as guaranteed by \ref{coveringdisjoint} of Lemma \ref{lem:whiba}. Using \ref{five} of the current lemma and the fact that $\mu$ is doubling, we may write
\begin{align*}
\nonumber & \int_{B_{j+1}} w\dd\mu \lesssim \normp{w}{p} \left( \frac{\mu(B_{j+1})}{\mu(\frac{1}{8}B_{j+1})} \right)^p \int_{\frac{1}{8}B_{j+1}} w \dd\mu
\lesssim \normp{w}{p} \int_{\frac{1}{8}B_{j+1}} w \dd\mu \\
& \hspace*{2em} \leq \normp{w}{p} \int_{2B_j} w\dd\mu
\lesssim \normp{w}{p} \left(\frac{\mu(2B_j)}{\mu(B_j)} \right)^p \int_{B_j} w\dd\mu
\lesssim \normp{w}{p} \int_{B_j} w\dd\mu,
\end{align*}
which proves \eqref{jfjs}.

Now suppose that $\dist(y,p_{j+1}) \geq \frac{1}{8} r(B_{j+1}).$ The balls $B_{j+1}^*=B(y, \frac{1}{8}r_{j+1})$ and $\frac{1}{8} B_{j+1}$ have the same radius and $\dist(y,p_{j+1}) \approx\frac{1}{8} r_{j+1}.$ Then, by Lemma \ref{lem:rmu} and the doubling condition \eqref{Doubling}, it holds that
$$
\mu(B_{j+1}^*)\approx \mu  \left( \tfrac{1}{8} B_{j+1} \right) \approx \mu (B_{j+1}).
$$
Using the triangle inequality, Lemma \ref{lem:whiba} \ref{radii}, and the fact that $y\in B_j \cap B_{j+1},$ we easily obtain for a $z\in B_{j+1}^*$
\begin{align*}
\dist(z,p_{j+1}) & \leq \dist(z,y)+\dist(y, p_{j+1}) \leq\tfrac{1}{8}r_{j+1}+r_{j+1} < 2r_{j+1},\\ 
\dist(z,p_j) & \leq \dist(z,y)+\dist(y, p_j) \leq\tfrac{1}{8}r_{j+1}+r_j \leq \tfrac{3}{2}r_j < 2r_j,\\ 
\end{align*}
which implies that $B_{j+1}^* \subset 2B_{j+1} \cap 2B_j.$ Applying \ref{five} of the current lemma, we conclude that
\begin{align*}
& \int_{B_{j+1}} w\dd\mu \leq \int_{2B_{j+1}} w\dd\mu \lesssim \normp{w}{p} \left( \frac{\mu(2B_{j+1})}{\mu(B_{j+1}^*)} \right)^p \int_{B_{j+1}^*} w\dd\mu \lesssim \normp{w}{p} \int_{B_{j+1}^*} w\dd\mu \\
& \hspace*{2em} \leq \normp{w}{p} \int_{2B_j } w\dd\mu \lesssim \normp{w}{p} \left(\frac{2B_j)}{\mu(B_j)} \right)^p \int_{B_j} w\dd\mu\lesssim \normp{w}{p} \int_{B_j} w \dd\mu.
\end{align*}
Thus, in any case we have $\int_{B_{j+1}} w\dd\mu \lesssim \normp{w}{p} \int_{B_j} w\dd\mu.$ Reversing the roles of $B_j$ and $B_{j+1}$, we obtain $\int_{B_j} w\dd\mu \approx \int_{B_{j+1}} w\dd\mu$, where the constants involved depend on $p$, the doubling constant, and $\normp{w}{p}.$ Furthermore, by Lemma \ref{lem:whiba} \ref{measures}, there exists a constant $C_1$ such that  
\begin{equation*}
\frac{1}{\mu(B_j)}\int_{B_j} w\dd\mu \leq C_1 \frac{1}{\mu(B_{j+1})} \int_{B_{j+1}} w\dd\mu. 
\end{equation*}
Recalling that $\widetilde{k}(B_1, B_2)$ is the number of balls in $\mathcal{W}(D)$ in the shortest chain from $B_1$ to $B_2$, we apply this recursively to obtain 
\begin{equation*}
\frac{1}{\mu(B_1)}\int_{B_1} w\dd\mu \leq C_1^{\widetilde{k}(B_1, B_2)} \frac{1}{\mu(B_2)} \int_{B_2} w\dd\mu. 
\end{equation*}
Choose $C = \log C_1$ to get the desired expression.
\end{proof}
\begin{remark}In the proof of property \ref{three} above, we in fact showed that whenever $\mu$ is doubling and $w\in A_p(D)$, then for any two Whitney balls $B_1, B_2\in \mathcal{W}(D)$ such that $B_1\cap B_2\neq\emptyset$ it holds that $\int_{B_1}w \approx\int_{B_2}w.$
Statement \ref{five} is a ``reverse $A_\infty$ condition'' that follows immediately from the $A_p$ condition. Namely, whenever $E \subset B$, then
\[\frac{\mu(E)}{\mu(B)}\leq C(w) \left(\frac{w(E)}{w(B)}\right)^\frac{1}{p}.\] 
\end{remark}

\begin{lemma}\label{lem:hold2}
Assume further that $X$ is a geodesic space and let $D\subset X$ be a domain, $w\in A_p(D)$ with $1\leq p<\infty$, $C$ a constant possibly depending on $C_d$, and $B_1, B_2\subset D$ balls satisfying
\begin{enumerate}[label=\normalfont{(\roman*)}]
\item \label{lem:hold2.ass1} $\dist(B_i, \partial D)\approx \rad(B_i)$, $i=1,2$,
\item \label{lem:hold2.ass2} $k(B_1, B_2)\leq C$.
\end{enumerate}
Then 
\begin{equation*}
\int_{B_1}w \dd\mu \approx \int_{B_2}w \dd\mu,
\end{equation*}
where the constants involved depend on $C,$ $C_d,$ $p,$ and $\normp{w}{p}.$
\end{lemma}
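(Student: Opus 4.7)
The plan is to reduce the comparison of integrals over the ``Whitney-like'' balls $B_1, B_2$ to the corresponding comparison over genuine Whitney balls, where the chain estimate Lemma~\ref{lem:hold1}~\ref{three} applies directly.

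First, I would anchor each $B_i$ to a Whitney ball. Let $z_i$ be the center of $B_i$; since $\mathcal{W}(D)$ covers $D$ by Lemma~\ref{lem:whiba}~\ref{coveringdisjoint}, I pick $W_i \in \mathcal{W}(D)$ with $z_i \in W_i$. Assumption~\ref{lem:hold2.ass1}, together with Lemma~\ref{lem:toolsforlemma2}~\ref{ztempone} and~\ref{ztempfour}, provides a ball $B_i'' \subset B_i \cap W_i$ with $\rad(B_i'') \approx \rad(B_i) \approx \rad(W_i)$ and $\mu(B_i'') \approx \mu(B_i) \approx \mu(W_i)$. Applying Lemma~\ref{lem:hold1}~\ref{five} once with $(B,E)=(B_i, B_i'')$ and once with $(B,E)=(W_i, B_i'')$, and combining with the trivial monotonicity $\int_{B_i''} w \dd\mu \leq \min\{\int_{B_i} w \dd\mu,\int_{W_i} w \dd\mu\}$, yields
\[
\int_{B_i} w \dd\mu \;\approx\; \int_{B_i''} w \dd\mu \;\approx\; \int_{W_i} w \dd\mu,
\]
with constants depending only on $p$, $\normp{w}{p}$, and $C_d$.

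Next, I would bound the Whitney chain length $\widetilde{k}(W_1, W_2)$. By Lemma~\ref{lem:whitneybolic}, $\widetilde{k}(W_1, W_2) \approx k(x_1^*, x_2^*)$, where $x_i^*$ is the center of $W_i$. By Lemma~\ref{qhdiam}, both $W_i$ and $B_i$ have bounded quasihyperbolic diameter, so $k(x_i^*, z_i) \lesssim 1$ (both inside $W_i$) and $k(z_i, y_i) \lesssim 1$ for any $y_i \in B_i$. Choosing $y_i \in B_i$ with $k(y_1, y_2)$ close to $k(B_1, B_2) \leq C$ and using the triangle inequality gives $k(x_1^*, x_2^*) \lesssim 1$, hence $\widetilde{k}(W_1, W_2) \lesssim 1$.

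This bounded chain length has two consequences. On the one hand, any realizing chain $W_1 = R_0, \ldots, R_n = W_2$ has $n \lesssim 1$ and consecutive Whitney balls have comparable measures (Lemma~\ref{lem:whiba}~\ref{measures}), so $\mu(W_1) \approx \mu(W_2)$. On the other hand, Lemma~\ref{lem:hold1}~\ref{three} gives
\[
\frac{1}{\mu(W_1)}\int_{W_1} w \dd\mu \;\lesssim\; \frac{1}{\mu(W_2)}\int_{W_2} w \dd\mu.
\]
Multiplying by the comparable measures and symmetrizing yields $\int_{W_1}w \dd\mu \approx \int_{W_2}w \dd\mu$, and combining with the first step closes the proof. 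The step I expect to be the main obstacle is the transfer $\int_{B_i} w \dd\mu \approx \int_{W_i} w \dd\mu$: it is precisely here that the geodesic assumption is essential, since the common subball $B_i''$ relies on Lemma~\ref{lem:toolsforlemma2}~\ref{ztempfour}. Without geodesics one would have to work harder to exhibit a comparable piece of $B_i \cap W_i$, and the entire reduction to a genuine Whitney chain would need to be revisited.
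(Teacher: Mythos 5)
Your proposal is correct and follows essentially the same route as the paper: anchor each $B_i$ to a Whitney ball containing its center, use Lemma~\ref{lem:toolsforlemma2}~\ref{ztempfour} to produce a common subball $B_i''$ of comparable measure and transfer the integrals via Lemma~\ref{lem:hold1}~\ref{five}, bound the Whitney chain length between the two anchoring Whitney balls via Lemmas~\ref{qhdiam} and~\ref{lem:whitneybolic}, and then apply Lemma~\ref{lem:hold1}~\ref{three} together with a comparability of measures along the chain. The only cosmetic difference is that you package the reduction as two displayed equivalences $\int_{B_i}w \approx \int_{W_i}w$ and $\int_{W_1}w \approx \int_{W_2}w$, and you invoke Lemma~\ref{lem:whiba}~\ref{measures} along the chain where the paper instead appeals to Lemma~\ref{lem:rmu}; both are valid.
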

\begin{proof}
Let $B_i'=B(z_i', r_i')\in\mathcal{W}(D)$, $i=1,2$, contain the centers of $B_1=B(z_1,r_1)$ and $B_2=B(z_2, r_2)$ respectively. Lemma \ref{lem:toolsforlemma2} \ref{ztempfour} guarantees that there exist balls $B_i''\subset B_i\cap B_i'$, $i=1,2$, such that $\rad(B_i'')\approx\rad(B_i')\approx\rad(B_i)$ and $\mu(B_i'')\approx\mu(B_i')\approx\mu(B_i)$. 

Furthermore, it holds that $k(z_1', z_2')\leq C$. To see this, let $x_1, x_2$ be points contained in $B_1$ and $B_2$ respectively such that $k(x_1,x_2) \leq  k(B_1, B_2)+C.$ Using the triangle inequality for $k$ we have
\begin{equation*}
k(z_1', z_2') \leq k(z_1', z_1) + k(z_1, x_1) + k(x_1, x_2) + k(x_2, z_2) + k(z_2, z_2').
\end{equation*}
Observe that $z_i\in B_i'$, $x_i\in B_i$, and $k(x_1, x_2)\leq 2C$. By Lemma \ref{qhdiam}, the quasihyperbolic diameters of the balls $B_i$ and $B_i'$ are uniformly bounded, and we have
\[k(z_1', z_2') \leq C_1.\]
Also, by Lemma \ref{lem:whitneybolic}, we have $k(z_1', z_2')\approx \widetilde{k}(B_1',B_2')$ and thus $\widetilde{k}(B_1',B_2')\lesssim C_1.$ With these remarks, Lemma \ref{lem:hold1} \ref{five} allows us to estimate
\begin{align}
\label{yydbl}& \frac{1}{\mu(B_1)}\int_{B_1}w \dd\mu \lesssim \left(\frac{\mu(B_1)}{\mu(B_1'')}\right)^{p-1} \frac{1}{\mu(B_1'')}\int_{B_1''}w \dd\mu \\
\nonumber \hspace*{2em} & \lesssim \frac{\mu(B_1')}{\mu(B_1'')}\frac{1}{\mu(B_1')}\int_{B_1'}w \dd\mu\\
\label{same} \hspace*{2em} & \lesssim \frac{1}{\mu(B_1')}\int_{B_1'}w \dd\mu \\
\label{yylem}\hspace*{2em}  & \lesssim \frac{1}{\mu(B_2')}\int_{B_2'}w \dd\mu \\
\label{yylemm} \hspace*{2em} & \lesssim \left(\frac{\mu(B_2')}{\mu(B_2'')}\right)^{p-1}\frac{1}{\mu(B_2'')}\int_{B_2''}w \dd\mu \\
\label{samee} \hspace*{2em} & \lesssim \frac{1}{\mu(B_2'')}\int_{B_2''}w \dd\mu \\
\nonumber \hspace*{2em} & \lesssim \frac{1}{\mu(B_2)}\int_{B_2}w \dd\mu .
\end{align}
Line \eqref{yydbl} follows from the fact that the measure $w\dd\mu$ is doubling, while \eqref{yylem} and \eqref{yylemm} are Lemma \ref{lem:hold1} \ref{three} and \ref{five}, respectively. On lines \eqref{same} and \eqref{samee} we used the fact that $\mu(B_i'')\approx\mu(B_i')\approx\mu(B_i)$. 

Finally, if $\left(B_1'= B^0,\ldots, B^N =B_2'\right)$ is the shortest Whitney chain connecting $B_1'$ and $B_2',$ we have that $N \lesssim C$ by the previous arguments. Since each pair of consecutive balls $(B^{j-1},$ $B^j)$ in the chain has nonempty intersection, we have $\rad(B^{j-1}) \approx \rad(B^j)$ by Lemma \ref{lem:whiba}\ref{radii} and therefore $\rad(B^0) \approx \rad(B^j) \approx \rad(B^N)$ for every for every $j=1,\ldots, N,$ because $N \lesssim C.$ Moreover, if $p_j \in B^{j-1} \cap B^j,$ the triangle inequality gives 
$$
\dist(p_0, p_N) \leq \sum_{j=1}^N \dist(p_{j-1}, p_j)\leq \sum_{j=1}^N 2 \rad(B_{j-1}) \lesssim \rad(B^N).
$$
It follows from Lemma \ref{lem:rmu} that $\mu(B_1')\approx \mu(B_2'),$ which in turn implies $\mu(B_1)\approx\mu(B_2).$ We conclude that 
$$
\int_{B_1}w \dd\mu  \lesssim \int_{B_2}w \dd\mu 
$$
and, swapping the roles of $B_1$ and $B_2$, the inequality in the other direction.
\end{proof}

\begin{bibdiv}
\begin{biblist}

\bib{MR3785798}{article}{
   author={Anderson, T.~C.},
   author={Cruz-Uribe, D.},
   author={Moen, K.},
   title={Extrapolation in the scale of generalized reverse H\"{o}lder weights},
   journal={Rev. Mat. Complut.},
   volume={31},
   date={2018},
   number={2},
   pages={263--286},
   issn={1139-1138},
}

\bib{AndersonHytonenTapiola2017}{article}{
	author = {Anderson, T.~C.},
	author = {Hytönen, T.}, 
	author = {Tapiola, O.},
	title = {Weak $A_\infty$ weights and weak reverse {H}ölder property in a space of homogeneous type}, 
	journal = {J. Geom. Anal.},
	volume={ 27},
	date = {2017},
	number = {1}, 
	pages = {95--119},
}

\bib{AuscherBortzEgertSaari2020}{article}{
	author = {Auscher, P.},
	author = {Bortz, S.}, 
	author = {Egert, M.},
	author = {Saari, O.},
	title = {Non-local {G}ehring lemmas in spaces of homogeneous type and applications}, 
	journal = {J. Geom. Anal.},
	volume={30},
	date = {2020},
	number = {4}, 
	pages = {3760--3805},
}

\bib{MR2867756}{book}{ 
   author={Bj\"{o}rn, A.},
   author={Bj\"{o}rn, J.},
   title={Nonlinear potential theory on metric spaces},
   series={EMS Tracts in Mathematics},
   volume={17},
   publisher={European Mathematical Society (EMS), Z\"{u}rich},
   date={2011},
   pages={xii+403},
   isbn={978-3-03719-099-9},
}

\bib{buda}{article}{ 
      title={Approximation and extension of functions of vanishing mean oscillation}, 
      author={Butaev, A.},
      author={Dafni, G.},
      journal={J. Geom. Anal.},
      date={2020},
      issn={1050-6926},
}

\bib{MR0499948}{book}{ 
   author={Coifman, R.~R.},
   author={Weiss, G.},
   title={Analyse harmonique non-commutative sur certains espaces homog\`enes: \'{e}tude de certaines int\'{e}grales singuli\`eres},
   series={Lecture Notes in Mathematics, Vol. 242},
   publisher={Springer-Verlag, Berlin-New York},
   date={1971},
   pages={v+160},
}

\bib{MR2078632}{article}{
   author={Cruz-Uribe, D.},
   author={Martell, J. M.},
   author={P\'{e}rez, C.},
   title={Extrapolation from $A_\infty$ weights and applications},
   journal={J. Funct. Anal.},
   volume={213},
   date={2004},
   number={2},
   pages={412--439},
   issn={0022-1236},
}

\bib{MR3544941}{article}{
   author={Cruz-Uribe, D.},
   author={Moen, K.},
   author={Rodney, S.},
   title={Matrix $A_p$ weights, degenerate Sobolev spaces, and mappings
   of finite distortion},
   journal={J. Geom. Anal.},
   volume={26},
   date={2016},
   number={4},
   pages={2797--2830},
   issn={1050-6926},
}

\bib{MR807149}{book}{
      author={Garc\'{\i}a-Cuerva, J.},
      author={Rubio~de Francia, J.~L.},
       title={Weighted norm inequalities and related topics},
      series={North-Holland Mathematics Studies},
   publisher={North-Holland Publishing Co., Amsterdam},
        date={1985},
      volume={116},
        ISBN={0-444-87804-1},
        note={Mathematics Studies, 104},
}

\bib{MR581801}{article}{ 
   author={Gehring, F.~W.},
   author={Osgood, B.~G.},
   title={Uniform domains and the quasihyperbolic metric},
   journal={J. Analyse Math.},
   volume={36},
   date={1979},
   pages={50--74 (1980)},
   issn={0021-7670},
}

\bib{MR437753}{article}{ 
   author={Gehring, F.~W.},
   author={Palka, B.~P.},
   title={Quasiconformally homogeneous domains},
   journal={J. Analyse Math.},
   volume={30},
   date={1976},
   pages={172--199},
   issn={0021-7670},
}

\bib{MR1791462}{book}{ 
   author={Genebashvili, I.},
   author={Gogatishvili, A.},
   author={Kokilashvili, V.},
   author={Krbec, M.},
   title={Weight theory for integral transforms on spaces of homogeneous
   type},
   series={Pitman Monographs and Surveys in Pure and Applied Mathematics},
   volume={92},
   publisher={Longman, Harlow},
   date={1998},
   pages={xii+410},
   isbn={0-582-30295-1},
}

\bib{MR3243734}{book}{ 
    AUTHOR = {Grafakos, L.},
     TITLE = {Classical {F}ourier analysis},
    SERIES = {Graduate Texts in Mathematics},
    VOLUME = {249},
   EDITION = {Third edition},
 PUBLISHER = {Springer, New York},
      YEAR = {2014},
     PAGES = {xviii+638},
      ISBN = {978-1-4939-1193-6; 978-1-4939-1194-3},   
}

\bib{MR3183648}{article}{
   author={Grafakos, L.},
   author={Liu, L.},
   author={Maldonado, D.},
   author={Yang, D.},
   title={Multilinear analysis on metric spaces},
   journal={Dissertationes Math.},
   volume={497},
   date={2014},
   pages={121 pp.},
}

\bib{MR1800917}{book}{
   author={Heinonen, J.},
   title={Lectures on analysis on metric spaces},
   series={Universitext},
   publisher={Springer-Verlag, New York},
   date={2001},
   pages={x+140},
   isbn={0-387-95104-0},
}

\bib{MR3363168}{book}{ 
   author={Heinonen, J.},
   author={Koskela, P.},
   author={Shanmugalingam, N.},
   author={Tyson, J.~T.},
   title={Sobolev spaces on metric measure spaces: an approach based on upper gradients},
   series={New Mathematical Monographs},
   volume={27},
   publisher={Cambridge University Press, Cambridge},
   date={2015},
   pages={xii+434},
   isbn={978-1-107-09234-1},
}

       
\bib{MR1162041}{article}{ 
    AUTHOR = {Holden, P.~J.},
     TITLE = {Extension domains for {$A_p$} weights},
   JOURNAL = {Michigan Math. J.},
    VOLUME = {39},
      YEAR = {1992},
    NUMBER = {2},
     PAGES = {353--368},
      ISSN = {0026-2285},
}

\bib{MR978019}{article}{
   author={Hurri, R.},
   title={Poincar\'{e} domains in ${\bf R}^n$},
   journal={Ann. Acad. Sci. Fenn. Ser. A I Math. Dissertationes},
   number={71},
   date={1988},
   pages={42},
   issn={0355-0087},
}

\bib{HytonenPerez2013}{article}{
	author = {Hytönen, T.},
	author = {Pérez, C.}, 
	title = {Sharp weighted bounds involving $A_\infty$}, 
	journal = {Anal. PDE},
	volume = {6},
	date = {2013},
	number = {4}, 
	pages = {777--818},
}

\bib{HytonenPerezRela2012}{article}{
	author = {Hytönen, T.},
	author = {Pérez, C.}, 
	author = {Rela, E.},
	title = {Sharp reverse {H}ölder property for $A_\infty$ weights on spaces of homogeneous type}, 
	journal = {J. Funct. Anal.},
	volume = {263},
	date = {2012},
	number = {12}, 
	pages = {3883--3899},
}

\bib{MR554817}{article}{ 
    AUTHOR = {Jones, P.~W.},
     TITLE = {Extension theorems for {BMO}},
   JOURNAL = {Indiana Univ. Math. J.},
    VOLUME = {29},
      YEAR = {1980},
    NUMBER = {1},
     PAGES = {41--66},
      ISSN = {0022-2518},
}

\bib{MR3265363}{article}{
   author={Kinnunen, J.},
   author={Shukla, P.},
   title={Gehring's lemma and reverse H\"{o}lder classes on metric measure
   spaces},
   journal={Comput. Methods Funct. Theory},
   volume={14},
   date={2014},
   number={2-3},
   pages={295--314},
   issn={1617-9447},
}

\bib{MR3130552}{article}{
   author={Kinnunen, J.},
   author={Shukla, P.},
   title={The structure of reverse H\"{o}lder classes on metric measure spaces},
   journal={Nonlinear Anal.},
   volume={95},
   date={2014},
   pages={666--675},
   issn={0362-546X},
}

\bib{MR2815740}{article}{
   author={Korte, R.},
   author={Kansanen, O.~E.},
   title={Strong $A_\infty$-weights are $A_\infty$-weights on metric spaces},
   journal={Rev. Mat. Iberoam.},
   volume={27},
   date={2011},
   number={1},
   pages={335--354},
   issn={0213-2230},
}

\bib{koskela}{article}{
    author = {Koskela, P.},
    title = {Review of the article ``Extension domains for {$A_p$} weights'' by {P.~J.} {H}olden},
    journal = {Mathematical Reviews},
    volume = {1162041},
    year = {1992},
    issn = {0026-2285},
    url = {https://mathscinet.ams.org/mathscinet-getitem?mr=1162041},
}

\bib{LuquePerezRela2017}{article}{
	author = {Luque, T.},
	author = {Pérez, C.}, 
	author = {Rela, E.},
	title = {Reverse {H}ölder property for strong weights and general measures}, 
	journal = {J. Geom. Anal},
	volume = {27},
	date = {2017},
	number = {1}, 
	pages = {162--182},
}

\bib{MR1232192}{book}{
   author={Stein, E.-M.},
   title={Harmonic analysis: real-variable methods, orthogonality, and
   oscillatory integrals},
   series={Princeton Mathematical Series},
   volume={43},
   publisher={Princeton University Press, Princeton, NJ},
   date={1993},
   pages={xiv+695},
   isbn={0-691-03216-5},
}

\bib{MR1011673}{book}{ 
    AUTHOR = {Str\"{o}mberg, J.-O.},
    AUTHOR = {Torchinsky, A.},
     TITLE = {Weighted {H}ardy spaces},
    SERIES = {Lecture Notes in Mathematics},
    VOLUME = {1381},
 PUBLISHER = {Springer-Verlag, Berlin},
      YEAR = {1989},
     PAGES = {vi+193},
      ISBN = {3-540-51402-3},
}

\bib{MR1373594}{article}{ 
    AUTHOR = {Vodop'yanov, S.~K.},
    author = {Greshnov, A.~V.},
     TITLE = {On the continuation of functions of bounded mean oscillation on spaces of homogeneous type with intrinsic metric},
   JOURNAL = {Sibirsk. Mat. Zh.},
    VOLUME = {36},
      YEAR = {1995},
    NUMBER = {5},
     PAGES = {1015--1048, i},
      ISSN = {0037-4474},
}

\end{biblist}
\end{bibdiv}

\end{document}